\newtheorem{thm}{Theorem}[section]
\newtheorem{conj}[thm]{Conjecture}
\newtheorem{claim}{Claim}
\newtheorem{lemma}[thm]{Lemma}
\newtheorem{cor}[thm]{Corollary}
\newcounter{casenum}
\newenvironment{caseof}{\setcounter{casenum}{1}}{\vskip.5\baselineskip}
\newcommand{\case}[2]{\vskip.5\baselineskip\par\noindent {\bfseries Case \arabic{casenum}:} #1\\#2\addtocounter{casenum}{1}}
\newcommand*{\rom}[1]{\expandafter{\romannumeral #1\relax}}
\def\@cite#1#2{{\normalfont[{\bfseries#1\if@tempswa , #2\fi}]}}
\title{On the number of generalized Sidon sets}
\author{J\'ozsef Balogh\thanks{Department of Mathematical Sciences, University of Illinois at Urbana-Champaign, IL, USA, and Moscow Institute of Physics and Technology, 9 Institutskiy per., Dolgoprodny, Moscow Region, 141701, Russian Federation.} \ \ \ \ \ \ \ Lina Li\thanks{Department of Mathematics, University of Illinois at Urbana-Champaign, Urbana, Illinois 61801, USA. Email: linali2@illinois.edu}}
\begin{document}
\maketitle

\begin{abstract}
A set $A$ of nonnegative integers is called a Sidon set if there is no Sidon 4-tuple, i.e., $(a,b,c,d)$ in $A$ with $a+b=c+d$ and $\{a, b\}\cap \{c, d\}=\emptyset$. Cameron and Erd\H os proposed the problem of determining the number of Sidon sets in $[n]$. Results of Kohayakawa, Lee, R\" odl and Samotij, and Saxton and Thomason has established that the number of Sidon sets is between $2^{(1.16+o(1))\sqrt{n}}$ and $2^{(6.442+o(1))\sqrt{n}}$. An $\alpha$-generalized Sidon set in $[n]$ is a set with at most $\alpha$ Sidon 4-tuples. One way to extend the problem of Cameron and Erd\H os is to estimate the number of $\alpha$-generalized Sidon sets in $[n]$. We show that the number of $(n/\log^4 n)$-generalized Sidon sets in $[n]$ with additional restrictions is $2^{\Theta(\sqrt{n})}$. In particular, the number of $(n/\log^5 n)$-generalized Sidon sets in $[n]$ is $2^{\Theta(\sqrt{n})}$. Our approach is based on some variants of the graph container method.

\end{abstract}

\section{Introduction}\label{intro}
A set $A$ of nonnegative integers is called a \textit{Sidon set} if there is no 4-tuple $(a,b,c,d)$ in $A$ with $a+b=c+d$ and $\{a, b\}\cap \{c, d\}=\emptyset$. Such a tuple $(a, b, c, d)$ is referred to as a \textit{Sidon 4-tuple}. A famous problem raised by Sidon asks the maximum size $\Phi(n)$ of Sidon subsets of $[n]$. Previous studies of Erd\H os and Tur\'an \cite{ET}, Singer \cite{JS}, Erd\H os \cite{PE}, and Chowla \cite{SC}, have showed that $\Phi(n)=(1+o(1))\sqrt{n}$. 
We denote by $\mathcal{Z}_n$ the family of Sidon subsets in $[n]$. 
Cameron and Erd\H os \cite{CE} first proposed the problem of determining $|\mathcal{Z}_n|$.
The extremal result indicates a trivial bound 
\begin{equation}\label{trib}
2^{\Phi(n)}\leq |\mathcal{Z}_n|\leq \sum_{1\leq i\leq\Phi(n)}\binom{n}{i}\leq n^{(1/2+o(1))\sqrt{n}}.
\end{equation}
Cameron and Erd\H os \cite{CE} improved the lower bound by showing $\limsup_n|\mathcal{Z}_n|2^{-\Phi(n)}=\infty$ and asked if the upper bound could also be improved. 
Based on the method introduced by Kleitman and Winston \cite{kleitmanwinston}, Kohayakawa, Lee, R\" odl and Samotij \cite{KLRS} strengthened the upper bound to $2^{c\Phi(n)}$, where $c$ is a constant arbitrarily close to $\log_2(32e)\approx6.442$ for sufficiently large enough $n$.
Using the hypergraph container method~\cite{BMS, ST}, Saxton and Thomason \cite{ST} showed that there are between $2^{(1.16+o(1))\sqrt{n}}$ and $2^{(55+o(1))\sqrt{n}}$ Sidon subsets of $[n]$, which indicates that neither of the bounds in (\ref{trib}) is tight.

We consider counting sets in which a positive upper bound is imposed on the number of Sidon 4-tuples.
An \textit{$\alpha$-generalized Sidon set} in $[n]$ is a set with at most $\alpha$ Sidon 4-tuples. 
One way to extend the Cameron and Erd\H os problem is to estimate the number of $\alpha$-generalized Sidon sets.  Clearly, a trivial lower bound of $2^{\Omega(\sqrt{n})}$ can be given by the number of Sidon sets. 
In this paper, we focus on the case when $\alpha$ is small. In particular, we are interested in determining how large can $\alpha$ be such that the number of $\alpha$-generalized Sidon subsets in $[n]$ is still $2^{\Theta(\sqrt{n})}$. 

For a set $I\subseteq[n]$ and a vertex $v\in [n]$, let $S_I(v)$ be the set of Sidon 4-tuples in $I$ containing $v$ and write $s_I(v)=|S_I(v)|$. Denote by $\mathcal{I}_n(\alpha)$ the family of $\alpha$-generalized Sidon sets $I$ in $[n]$ with $|I|\leq \sqrt{n}/\log n$ or $|I|\geq\sqrt{n}/\sqrt{\log n}$,
and $\mathcal{J}_n(\alpha)$ the family of $\alpha$-generalized Sidon sets $I$ in $[n]$ with $|\{v\in I: s_I(v) \geq \sqrt{n}/\log^4 n\}|\leq\sqrt{n}/\log n$. 
The following are the main results of this paper.
\begin{thm}\label{sidonthm}
Let $\alpha=n/\log^4n$.  For $n$ sufficiently large, we have $|\mathcal{I}_n(\alpha)|\leq 2^{180\sqrt{n}}.$
\end{thm}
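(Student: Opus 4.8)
The plan is to split the family $\mathcal{I}_n(\alpha)$ according to the size of the set $I$ and apply a container-type argument in each regime. For the \emph{small} sets, those with $|I|\le \sqrt n/\log n$, no cleverness is needed: the number of such sets is at most $\sum_{i\le \sqrt n/\log n}\binom{n}{i}\le 2^{O(\sqrt n)}$ by the crude bound $\binom{n}{i}\le n^i$ and $i\log n\le \sqrt n$, which is comfortably below $2^{180\sqrt n}$. So the heart of the matter is the \emph{large} regime, $|I|\ge \sqrt n/\sqrt{\log n}$, where $I$ is an $\alpha$-generalized Sidon set with $\alpha=n/\log^4 n$. Here I would first pass to a genuinely Sidon subset: since $I$ has at most $\alpha$ Sidon $4$-tuples, one can greedily delete a vertex from each of a maximal collection of edge-disjoint Sidon $4$-tuples and thereby remove all Sidon $4$-tuples by deleting at most $O(\alpha/\sqrt n)=O(\sqrt n/\log^4 n)$ vertices — wait, one must be careful: a single vertex can kill many $4$-tuples, and in fact $s_I(v)$ can be as large as $\Theta(\sqrt n)$ per vertex, so deleting $O(\alpha/\sqrt n)\cdot$(polylog) vertices suffices. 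The upshot is that every large $I\in\mathcal{I}_n(\alpha)$ can be written as $S\cup T$ where $S$ is a Sidon set and $|T|=O(\sqrt n/\log^{3} n)$ or so. Counting the choices of $S$ uses the known bound $|\mathcal{Z}_n|\le 2^{6.5\sqrt n}$ (or even the trivial $n^{O(\sqrt n)}$ is too weak — we need the linear-exponent bound of Kohayakawa–Lee–Rödl–Samotij), and counting the choices of $T$ gives $\binom{n}{O(\sqrt n/\log^3 n)}\le 2^{O(\sqrt n/\log^2 n)}=2^{o(\sqrt n)}$.

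The main obstacle is that the decomposition $I=S\cup T$ is not well-defined — different deletions give different $S$ — so one cannot simply multiply $|\mathcal{Z}_n|$ by the number of $T$'s; one needs an \emph{algorithmic} or \emph{container} version so that $I$ is recovered from bounded data. Concretely, I would run the Kleitman–Winston / graph container algorithm on the graph $G_I$ on vertex set $[n]$ whose edges encode the Sidon $4$-tuple relation restricted to $I$, treating $I$ as an "almost-independent" set: since $I$ spans only $\le\alpha$ "bad edges" (one per Sidon $4$-tuple, suitably defined), a standard modification of the container method — of the sort developed for counting graphs with few copies of a fixed subgraph, and which the authors advertise in the abstract as "variants of the graph container method" — produces a family $\mathcal{C}$ of containers with $|\mathcal{C}|\le 2^{O(\sqrt n)}$ such that every $I\in\mathcal{I}_n(\alpha)$ is contained in some $C\in\mathcal{C}$ with $C$ having at most, say, $O(\alpha)$ bad edges and $|C|\le O(\sqrt n)$ up to a polylog factor. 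The key quantitative inputs are the supersaturation statement — a subset of $[n]$ of size $m\gg\sqrt n$ contains $\Omega(m^3/n)$ Sidon $4$-tuples, far more than $\alpha$ once $m$ exceeds $C\sqrt n/\log^{4/3}n$ — together with the degree bound $s_I(v)=O(\sqrt n)$, which controls the per-step progress of the container algorithm.

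So the steps, in order, are: (1) dispose of small sets by the binomial bound; (2) set up the auxiliary graph/hypergraph recording Sidon $4$-tuples and record the key degree and codegree estimates; (3) prove the supersaturation bound showing any large set has $\gg\alpha$ Sidon $4$-tuples unless its size is $O(\sqrt n\cdot\mathrm{polylog})$ — actually, since we only assume $\le\alpha$ $4$-tuples, supersaturation forces $|I|=O(\sqrt n/\log^{4/3}n)\le \sqrt n$, so in fact $|I|\le\sqrt n\cdot\mathrm{polylog}$ automatically, which is what we need; (4) run the container algorithm to obtain $2^{O(\sqrt n)}$ containers each of size $O(\sqrt n\,\mathrm{polylog})$ and each spanning $\le\alpha$ bad edges, so that each container contributes at most $2^{O(\sqrt n/\mathrm{polylog})}\cdot(\text{Sidon count inside it})$ subsets; (5) multiply out, checking that all error exponents are $o(\sqrt n)$ and the constant stays below $180$. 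I expect step (4) — getting the container algorithm to terminate with containers that are simultaneously small \emph{and} sparse in bad edges, which requires that the bad-edge count $\alpha=n/\log^4 n$ be genuinely small relative to the number of $4$-tuples forced by supersaturation at the relevant scale — to be the technically delicate point; everything else is bookkeeping with generous slack in the constant $180$.
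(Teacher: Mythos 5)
Your reduction to genuine Sidon sets is the core of the proposal, and it is where the argument breaks. You claim every large $I\in\mathcal{I}_n(\alpha)$ can be written as $S\cup T$ with $S$ Sidon and $|T|=O(\sqrt{n}/\log^3 n)$, justified by a greedy deletion. The greedy bound does not give this: once the maximum degree $s_I(v)$ among the surviving vertices is small, each deletion destroys only few Sidon $4$-tuples, and in the extreme the tuples can form a large ``matching''. Since $\alpha=n/\log^4 n\gg\sqrt n$, such configurations are not excluded: take a Sidon set of size about $\sqrt n/2$ in $[1,n/4]$ together with $\Theta(\sqrt n/\log^2 n)$ pairwise disjoint blocks $\{x,x+1,x+2,x+3\}$ placed in $[3n/4,n]$ with the block positions forming a Sidon-with-margin set; the total number of Sidon $4$-tuples is $O(n/\log^4 n)$, the set lies in the large regime of $\mathcal{I}_n(\alpha)$, yet it contains $\Theta(\sqrt n/\log^2 n)$ vertex-disjoint Sidon $4$-tuples, so every Sidon subset must omit that many elements. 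Thus $|T|=O(\sqrt n/\log^3 n)$ is false, and nothing in your sketch proves any deletion bound of the form $|T|=O(\sqrt n/\log n)$ (which is what you would need; note that for an upper bound the non-uniqueness of the decomposition you worry about is harmless --- existence would suffice --- so the missing ingredient is precisely this structural statement, which the paper neither proves nor uses). Incidentally your supersaturation is misquoted: a set of size $m$ with $|A||U|\ge 6n$ has $\Omega(m^4/n)$ Sidon $4$-tuples, not $\Omega(m^3/n)$, and this only forces $|I|<\sqrt{6n}$; the conclusion $|I|=O(\sqrt n/\log^{4/3}n)$ cannot hold, since Sidon sets of size $(1+o(1))\sqrt n$ lie in $\mathcal{I}_n(\alpha)$.

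Your fallback --- ``a standard modification of the Kleitman--Winston/container method produces $2^{O(\sqrt n)}$ containers of size $O(\sqrt n\,\mathrm{polylog}\,n)$'' --- is exactly the nontrivial content of the paper, not a routine step. The actual argument removes only the vertices with $s_I(v)\ge\sqrt n/\log^3 n$ (at most $\sqrt n/\log n$ of them), accepts that the remainder is merely a low-degree generalized Sidon set (not Sidon), and then proves a bespoke certificate lemma: a multi-phase Kleitman--Winston iteration on auxiliary multigraphs $H^U(A)$ whose edge multiplicities must be bounded (this is where the degree cap $\sqrt n/\log^3 n$ enters), with the auxiliary set $U$ re-chosen in every phase to keep the supersaturation condition $|A||U|\ge 6n$ alive, and --- in the case where many vertices have degree between $\sqrt n/\log^4 n$ and $\sqrt n/\log^3 n$ --- with $U_0$ taken to be a random subset $W\subseteq I$ whose required properties rest on Janson-type deviation inequalities and on the hypothesis $|I|\ge\sqrt n/\sqrt{\log n}$; this is the sole reason $\mathcal{I}_n(\alpha)$ is defined with the size dichotomy, a feature your argument never engages with. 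The paper's own Remark 1 notes that the off-the-shelf hypergraph container theorem reaches only $\alpha=O(n/\log^8 n)$, so appealing to a ``standard'' container statement at $\alpha=n/\log^4 n$ is not available. As it stands, steps (3)--(5) of your plan assert the conclusions of the paper's Lemmas 2.1, 2.3, 2.5--2.7 and 3.1 without proofs, and step (2)'s quantitative inputs are wrong, so the proposal has a genuine gap rather than an alternative proof.
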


\begin{thm}\label{sidonthm2}
Let $\alpha=n/\log^4 n$. For $n$ sufficiently large, we have $|\mathcal{J}_n(\alpha)|\leq2^{180\sqrt{n}}.$
\end{thm}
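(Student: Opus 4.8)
The plan is to bootstrap from Theorem~\ref{sidonthm}. Split $\mathcal J_n(\alpha)$ according to $|I|$. If $|I|\le\sqrt n/\log n$ or $|I|\ge\sqrt n/\sqrt{\log n}$, then $I\in\mathcal I_n(\alpha)$ and Theorem~\ref{sidonthm} already controls the number of such sets (with room in the constant). So it suffices to show that the number of $I\in\mathcal J_n(\alpha)$ with $\sqrt n/\log n<|I|<\sqrt n/\sqrt{\log n}$ is $2^{O(\sqrt n)}$ with a small enough implied constant --- equivalently, to prove both results with $180$ replaced by a constant slightly below $180$ and then add.

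For such an $I$, set $B=\{v\in I:\ s_I(v)\ge\sqrt n/\log^4 n\}$ and $I'=I\setminus B$; by the defining property of $\mathcal J_n(\alpha)$ we have $|B|\le\sqrt n/\log n$. Since $I=B\cup I'$, it is enough to count the pairs $(B,I')$: there are at most $\binom n{\le\sqrt n/\log n}=2^{O(\sqrt n)}$ choices of $B$, and $I'$ is an $\alpha$-generalized Sidon set with $s_{I'}(v)<\sqrt n/\log^4 n$ for every $v\in I'$ and $|I'|\le|I|<\sqrt n/\sqrt{\log n}$. If in addition $|I'|\le\sqrt n/\log n$ there are trivially at most $2^{O(\sqrt n)}$ choices, so we may assume $\sqrt n/\log n<|I'|<\sqrt n/\sqrt{\log n}$. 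Thus everything reduces to counting the sets $J\subseteq[n]$ that are $\alpha$-generalized Sidon, satisfy $\sqrt n/\log n<|J|<\sqrt n/\sqrt{\log n}$, and have $s_J(v)<\sqrt n/\log^4 n$ for all $v\in J$, and showing there are $2^{O(\sqrt n)}$ of them.

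These $J$ I would count by a (variant of the) graph-container / Kleitman--Winston argument on the conflict structure of Sidon $4$-tuples, running exactly as in the proof of Theorem~\ref{sidonthm} but with the low-degree hypothesis compensating for $J$ not being a genuine Sidon set. Two ingredients drive it. First, a supersaturation estimate: applying convexity to the number of representations $a+b$ with $a,b\in W$ (at most $2n-1$ possible values of $a+b$, and $\binom{|W|+1}{2}$ pairs), one gets that every $W\subseteq[n]$ with $|W|\ge C\sqrt n$ spans $\Omega(|W|^4/n)$ Sidon $4$-tuples, hence contains a vertex of Sidon-degree $\Omega(|W|^3/n)=\Omega(\sqrt n)$ inside $W$; consequently every $\alpha$-generalized Sidon set has size $O(\sqrt n)$, and a container-building process that repeatedly removes a high-Sidon-degree vertex of the surviving set can be run until that set has size $O(\sqrt n)$. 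Second, the low-degree hypothesis keeps the fingerprints small: since $|J|>\sqrt n/\log n$ while every $v\in J$ lies in fewer than $\sqrt n/\log^4 n$ Sidon $4$-tuples, each time the process commits to a vertex $v\in J$ and prunes the available set, only $O(s_J(v))<\sqrt n/\log^4 n$ of the pruned vertices lie in $J$; recording those explicitly, the total fingerprint (committed vertices together with their recorded pruned $J$-vertices) has size $O(\sqrt n/\log n)$, so there are only $\binom n{\le O(\sqrt n/\log n)}\cdot n^{O(\log^3 n)}=2^{O(\sqrt n)}$ fingerprints, each determining a container of size $O(\sqrt n)$. Summing $2^{O(\sqrt n)}$ over the containers gives the required bound.

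The step I expect to be the main obstacle is implementing this container argument rigorously, since Sidon $4$-tuples form a mixed $3$/$4$-uniform structure rather than a graph. One must (i) set up the conflict (hyper)graph relative to the already-committed vertices so that $4$-tuples through a committed vertex reduce to a bounded pruning list --- handling, in particular, the tuples that use two or three committed vertices and the up to $\alpha=n/\log^4 n$ tuples that $J$ is \emph{allowed} to contain, which is far too many to list individually; and (ii) carry the hypothesis ``$J$ spans few, low-Sidon-degree $4$-tuples'' through the (iterated) container reduction so that the containers genuinely shrink to size $O(\sqrt n)$ rather than stalling around $\sqrt n/\sqrt{\log n}$ --- precisely the regime in which the problem without a degree restriction is delicate, and the reason the middle size interval is excluded from $\mathcal I_n(\alpha)$. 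I expect both points to be handled by the same machinery as Theorem~\ref{sidonthm}, with the low-degree hypothesis supplying exactly the slack needed to keep the fingerprints of size $O(\sqrt n/\log n)$.
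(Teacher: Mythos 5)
Your overall route is, in substance, the paper's own: the paper also proves Theorem~\ref{sidonthm2} by rerunning the certificate machinery of Theorem~\ref{sidonthm}, after observing that the hypothesis on high-degree vertices makes the size assumption dispensable (this is Lemma~\ref{mainlemmasidon2}); your preliminary step of splitting off $B=\{v\in I:\ s_I(v)\ge\sqrt n/\log^4 n\}$ at cost $2^{O(\sqrt n)}$ is a repackaging of the paper's choice $R_0=I_h$, and your size split (modulo needing Theorem~\ref{sidonthm} with a constant slightly below $180$) is avoidable, since the paper treats all of $\mathcal J_n(\alpha)$ uniformly. The genuine gap is that everything the theorem adds to Theorem~\ref{sidonthm} is concentrated in the step you defer, namely that the container process really does run in the middle-size regime under the degree hypothesis. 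The justification is specific and you never locate it: the only place the assumption $|I|\ge\sqrt n/\sqrt{\log n}$ is used in proving Lemma~\ref{mainlemmasidon} is Case 2 of the initial set-up, where the probabilistic Lemmas~\ref{Wsize}, \ref{Wsmulti} and \ref{Wdegree} construct a random $W\subseteq I$ serving as $U_0$ together with $R_0=S(W)$; once the high-degree vertices are few (in your reduction, absent), one takes $R_0=I_h$, lets $U_0$ be an arbitrary $(\sqrt n/\log n)$-subset of $C_0\cap I$ (stopping with $L=0$ if none exists), and the remainder of the proof and of the counting in Theorem~\ref{sidonthm} goes through verbatim. Writing ``the same machinery with the low-degree hypothesis supplying the slack'' asserts precisely this and does not prove it.

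Moreover, the mechanism you sketch is not the one that works, so as written it would not close the gap. First, the low-degree hypothesis is not used to bound a list of pruned $J$-vertices to be recorded: in the Kleitman--Winston process a vertex is pruned only when its accumulated state exceeds the phase threshold ($\sqrt n/\log^4 n$ in Phase 1, $\sqrt n/\log^3 n$ afterwards), and since for $v\in J'$ the state counts Sidon $4$-tuples of $J'$ through $v$ (the selected vertices and the sets $U_i$ lie in $J'$), no vertex of $J'$ is ever pruned once $B$ has been removed. This is exactly what yields the container property, Condition (i); there is nothing to record, and the up to $n/\log^4 n$ permitted tuples you worry about are absorbed by the same observation. (Your own accounting also does not add up: $O(\sqrt n/\log n)$ committed vertices times up to $\sqrt n/\log^4 n$ recorded vertices each is far larger than $O(\sqrt n/\log n)$, and the factor $n^{O(\log^3 n)}$ corresponds to nothing in the process.) Second, the certificate cannot consist only of $O(\sqrt n/\log n)$ vertices: to reconstruct the containers one must also record the auxiliary sets $U_0,\dots,U_{L-1}$, and $|U_i|=12n/|C_i|$ can be of order $\sqrt n$; their contribution to the number of certificates is $2^{\Theta(\sqrt n)}$ and is controlled only because each $U_i$ is chosen inside the already-determined container $C_i$ (Claim~\ref{sidonclaim}), while the bound of order $\sqrt n/\log n$ on the number of committed vertices comes from double counting the state function against the supersaturation bound of Lemma~\ref{sidonedge}, not from the pruning-list bookkeeping you describe.
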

One can indeed run the same proofs and show that for any given number $c>0$, both theorems hold for $\alpha=cn/\log^4 n$  with the upper bound $2^{C \sqrt{n}}$ for some constant $C$, depending on $c$. 

Theorem~\ref{sidonthm2} immediately implies the following.
\begin{cor}
For $\alpha=O(n/\log^5 n)$, the number of $\alpha$-generalized Sidon sets in $[n]$ is $2^{\Theta(\sqrt{n})}.$
\end{cor}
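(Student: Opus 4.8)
The plan is to deduce the Corollary directly from Theorem~\ref{sidonthm2} by showing that, when $\alpha = O(n/\log^5 n)$, every $\alpha$-generalized Sidon set in $[n]$ actually lies in the family $\mathcal{J}_n(\alpha')$ for $\alpha' = n/\log^4 n$. The lower bound $|\mathcal{I}_n(\alpha)|, |\mathcal{J}_n(\alpha)| \ge 2^{\Omega(\sqrt n)}$ is immediate since every Sidon set is an $\alpha$-generalized Sidon set for every $\alpha \ge 0$, and there are at least $2^{\Phi(n)} = 2^{(1+o(1))\sqrt n}$ of these by~\eqref{trib}. So the whole content is the matching upper bound.

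First I would fix an $\alpha$-generalized Sidon set $I \subseteq [n]$ with $\alpha \le c n/\log^5 n$ for a constant $c$, and set $t = \sqrt n/\log^4 n$. Let $H = \{v \in I : s_I(v) \ge t\}$ be the set of vertices lying in many Sidon $4$-tuples. The key observation is a double-counting bound: since each Sidon $4$-tuple $(a,b,c,d)$ contains exactly four vertices, we have $\sum_{v \in I} s_I(v) = 4\,(\text{number of Sidon }4\text{-tuples in }I) \le 4\alpha$. Hence
\begin{equation*}
|H| \cdot t \le \sum_{v \in H} s_I(v) \le \sum_{v \in I} s_I(v) \le 4\alpha \le \frac{4cn}{\log^5 n},
\end{equation*}
so that $|H| \le \dfrac{4cn/\log^5 n}{\sqrt n/\log^4 n} = \dfrac{4c\sqrt n}{\log n} \le \dfrac{\sqrt n}{\log n}$ once $n$ is large enough (taking, say, $c \le 1/4$; for larger $c$ one uses the stronger form of Theorem~\ref{sidonthm2} mentioned in the paper, with $\alpha = c' n/\log^4 n$ for an appropriate $c'$). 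This is exactly the defining condition of $\mathcal{J}_n(\alpha')$ with $\alpha' = n/\log^4 n$, and since $\alpha \le \alpha'$ for large $n$, $I$ is an $\alpha'$-generalized Sidon set as well. Therefore $I \in \mathcal{J}_n(n/\log^4 n)$.

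Consequently the collection of all $\alpha$-generalized Sidon sets in $[n]$ is contained in $\mathcal{J}_n(n/\log^4 n)$, so by Theorem~\ref{sidonthm2} its size is at most $2^{180\sqrt n}$. Combined with the trivial lower bound $2^{(1+o(1))\sqrt n}$, the number of such sets is $2^{\Theta(\sqrt n)}$, proving the Corollary. There is essentially no obstacle here: the only point requiring a little care is matching constants, i.e. absorbing the factor hidden in $O(n/\log^5 n)$ either into the slack between $\log^5 n$ and $\log^4 n$ (which wins for large $n$ regardless of the constant) or into the constant-dependent version of Theorem~\ref{sidonthm2}; both routes are routine.
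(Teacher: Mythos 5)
Your deduction is the one the paper intends: the paper gives no explicit proof (it declares the corollary an immediate consequence of Theorem~\ref{sidonthm2}), and your double count $\sum_{v\in I}s_I(v)\le 4\alpha$, which bounds the number of elements $v$ with $s_I(v)\ge \sqrt{n}/\log^4 n$ by $4\alpha\log^4 n/\sqrt{n}$, combined with the trivial lower bound coming from Sidon sets, is exactly the intended route.

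The one place where you are too quick is the final paragraph, where you dismiss the implied constant in $O(n/\log^5 n)$ as routine. With $\alpha\le cn/\log^5 n$ your count gives $|H|\le 4c\sqrt{n}/\log n$ for $H=\{v\in I: s_I(v)\ge\sqrt{n}/\log^4 n\}$, and here the powers of $\log n$ cancel \emph{exactly}: the slack between $\log^5 n$ and $\log^4 n$ only buys you the containment $\alpha\le n/\log^4 n$, not the membership condition $|H|\le\sqrt{n}/\log n$, so your first route does not ``win for large $n$ regardless of the constant.'' Your second route is also aimed at the wrong parameter: the constant-flexible version of Theorem~\ref{sidonthm2} mentioned in the paper enlarges $\alpha$ to $c'n/\log^4 n$, but the threshold $\sqrt{n}/\log n$ in the definition of $\mathcal{J}_n(\alpha)$ is hard-coded and does not scale with $\alpha$, so it does not tolerate $|H|>\sqrt{n}/\log n$. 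Thus your argument is complete only when $4c\le 1$ (note that even $c=1$ is not covered, because of the factor $4$). Two honest patches: (a) observe that the proofs of Lemma~\ref{mainlemmasidon2} and Theorem~\ref{sidonthm2} go through verbatim with $\sqrt{n}/\log n$ replaced by $C'\sqrt{n}/\log n$, at the cost of a worse constant than $180$; or (b) keep the theorem as stated and peel off the high-degree elements: set $J=I\setminus H$, so that $s_J(v)\le s_I(v)<\sqrt{n}/\log^4 n$ for every $v\in J$, hence $J\in\mathcal{J}_n(n/\log^4 n)$, and $I$ is recovered from the pair $(J,H)$ with $H\subseteq[n]$, $|H|\le 4c\sqrt{n}/\log n$; this multiplies the bound of Theorem~\ref{sidonthm2} by only $\sum_{i\le 4c\sqrt{n}/\log n}\binom{n}{i}\le 2^{(4c+o(1))\sqrt{n}}$, which is harmless for the $2^{\Theta(\sqrt{n})}$ conclusion.
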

A simple probabilistic argument can be used to give a lower bound on the number of $\alpha$-generalized Sidon sets in $[n]$: let $m=(\alpha n)^{\frac14}$; a typical $m$-element subset on $[n]$ contains about $\Theta(m^4/n)=\Theta(\alpha)$ Sidon 4-tuples, and there are $2^{\Theta(m\log n)}=2^{\Theta((\alpha n)^{\frac14}\log n)}$ of them. In particular, for $\alpha\gg \sqrt{n}/\log^4 n$, there are $2^{\Theta((\alpha n)^{\frac14}\log n)}\gg 2^{\Theta(\sqrt{n})}$ subsets with $\Theta(\alpha)$ Sidon 4-tuples. Therefore, if the number of $\alpha$-generalized Sidon subsets in $[n]$ has magnitude $2^{\Theta(\sqrt{n})}$, then the order of $\alpha$ cannot be greater than $n/\log^4 n$. We believe that 4 in the exponent is the best possible. 
\begin{conj}\label{conj}
For $\alpha=\Theta(n/\log^4 n)$, the number of $\alpha$-generalized Sidon sets in $[n]$ is $2^{\Theta(\sqrt{n})}.$
\end{conj}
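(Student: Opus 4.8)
The plan is to derive Conjecture~\ref{conj}'s lower bound and upper bound separately, and note that the upper bound is essentially already in hand. For the upper bound, take $\alpha = Cn/\log^4 n$ for a fixed constant $C$. The remark following Theorem~\ref{sidonthm2} asserts that Theorems~\ref{sidonthm} and \ref{sidonthm2} both go through for $\alpha = cn/\log^4 n$ with upper bound $2^{C'\sqrt n}$, so $|\mathcal{I}_n(\alpha)| + |\mathcal{J}_n(\alpha)| \le 2^{O(\sqrt n)}$. The only thing left is to show that \emph{every} $\alpha$-generalized Sidon set lies in $\mathcal{I}_n(\alpha) \cup \mathcal{J}_n(\alpha)$, or rather that the sets outside these families are few enough. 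Concretely, if $I$ is an $\alpha$-generalized Sidon set with $\sqrt n/\log n \le |I| \le \sqrt n/\sqrt{\log n}$ and with more than $\sqrt n/\log n$ vertices $v$ having $s_I(v) \ge \sqrt n/\log^4 n$, then summing these local counts gives at least $\tfrac12 \cdot (\sqrt n/\log n)(\sqrt n/\log^4 n) = \tfrac12 n/\log^5 n$ Sidon 4-tuples (each 4-tuple counted at most $4$ times), which for $\alpha = \Theta(n/\log^4 n)$ is a contradiction once $n$ is large since $n/\log^5 n \ll n/\log^4 n$. Hence $\mathcal{I}_n(\alpha) \cup \mathcal{J}_n(\alpha)$ covers all $\alpha$-generalized Sidon sets, and the count is $2^{O(\sqrt n)}$. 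Wait — this covers all sets only when $\alpha$ is genuinely smaller than $cn/\log^4 n$ for the right $c$; for $\alpha = \Theta(n/\log^4n)$ exactly at the threshold one instead chooses the hidden constant in $\mathcal{J}_n$'s definition large enough, or equivalently runs Theorem~\ref{sidonthm2} with the family redefined using threshold $\sqrt n/\log^{4-\epsilon} n$; this is the point where one must be slightly careful about constants.

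For the lower bound, the probabilistic construction in the paragraph before the conjecture does the job, but one should present it cleanly. Set $m = (\alpha n)^{1/4} = \Theta(\sqrt n/\log n)$. Choose a uniformly random $m$-element subset $A \subseteq [n]$. The expected number of Sidon 4-tuples in $A$: there are $O(n^3)$ solutions of $a+b=c+d$ in $[n]$ with $\{a,b\}\cap\{c,d\}=\emptyset$ (for each of the $\binom n2$ choices of an unordered pair $\{a,b\}$ there are $O(n)$ ways to complete to $\{c,d\}$ with the same sum, and ordered tuples multiply by a constant), and each fixed 4-tuple of distinct elements survives with probability $\Theta(m^4/n^4)$, so $\mathbb{E}[\#\text{Sidon 4-tuples in }A] = O(m^4/n) = O(\alpha)$. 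By Markov's inequality, with probability at least $1/2$ the set $A$ has at most $2 \cdot O(\alpha) = O(\alpha)$ Sidon 4-tuples; absorbing the constant (again this is where the constant in $\Theta(n/\log^4 n)$ must be chosen compatibly, or one passes to a random subset of a slightly smaller $m$), at least half of all $m$-subsets are $\alpha$-generalized Sidon. Their number is $\binom nm \ge (n/m)^m = 2^{\Omega(m\log n)} = 2^{\Omega(\sqrt n)}$ since $\log(n/m) = \log(\sqrt n \log n) = \Theta(\log n)$ and $m = \Theta(\sqrt n/\log n)$. This gives $2^{\Omega(\sqrt n)}$ lower bound.

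Combining, the number of $\Theta(n/\log^4 n)$-generalized Sidon sets in $[n]$ is $2^{\Theta(\sqrt n)}$, which is the conjecture.

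I expect the main (really the only) obstacle to be the bookkeeping of constants at the threshold $\alpha = \Theta(n/\log^4 n)$: Theorems~\ref{sidonthm} and \ref{sidonthm2} are stated for $\alpha = n/\log^4 n$ and the remark extends them to any fixed multiple, but the \emph{partition} of all $\alpha$-generalized Sidon sets into $\mathcal{I}_n$ and $\mathcal{J}_n$ requires the thresholds in the definitions of those families (the $\sqrt n/\log n$, $\sqrt n/\sqrt{\log n}$, $\sqrt n/\log^4 n$) to have enough slack relative to $\alpha$. One must check that for $\alpha = Cn/\log^4 n$ the double-counting inequality $\tfrac12 |V_{\mathrm{big}}| \cdot (\sqrt n/\log^4 n) \le 4\alpha$ forces $|V_{\mathrm{big}}| = O(\log n) \ll \sqrt n/\log n$, which it does comfortably, so in fact the partition works for \emph{every} fixed $C$ without modifying the family definitions — the slack is a genuine $\log n$ factor. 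Hence there is no real obstacle, only the need to state these inequalities explicitly; the substance is entirely contained in Theorems~\ref{sidonthm} and~\ref{sidonthm2}.
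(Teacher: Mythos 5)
The statement you are trying to prove is stated in the paper as a \emph{conjecture}: the authors explicitly say they were unable to prove it, and their Table~1 records $2^{O(\sqrt n\log^{1/4}n)}$ as the best known upper bound for $\alpha=n/\log^4 n$. Your lower bound is fine (Sidon sets alone already give $2^{\Omega(\sqrt n)}$), but the upper bound argument has a genuine gap at exactly the point the paper gets stuck: the claim that every $\alpha$-generalized Sidon set lies in $\mathcal{I}_n(\alpha)\cup\mathcal{J}_n(\alpha)$. Your double counting is carried out incorrectly. If $h$ elements $v\in I$ have $s_I(v)\ge \sqrt n/\log^4 n$, then $4\alpha\ge\sum_{v}s_I(v)\ge h\,\sqrt n/\log^4 n$, so for $\alpha=Cn/\log^4 n$ you only get $h\le 4C\sqrt n$ --- not $O(\log n)$ as you assert in your final paragraph, and far above the threshold $\sqrt n/\log n$ in the definition of $\mathcal{J}_n(\alpha)$. (Your earlier sentence is also self-defeating: forcing at least $\tfrac12 n/\log^5 n$ Sidon 4-tuples does \emph{not} contradict the hypothesis of at most $Cn/\log^4 n$ of them, precisely because $n/\log^5 n\ll n/\log^4 n$.) The covering argument only works when $\alpha=O(n/\log^5 n)$, where the same computation gives $h=O(\sqrt n/\log n)$; that is exactly how the paper deduces its Corollary from Theorem~\ref{sidonthm2}, so your argument re-derives the corollary rather than the conjecture. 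The sets you fail to cover --- those of size between $\sqrt n/\log n$ and $\sqrt n/\sqrt{\log n}$ with up to $\Theta(\sqrt n)$ elements of large $s_I(v)$ --- are precisely the ones neither theorem handles.

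Your fallback of redefining $\mathcal{J}_n$ with a larger threshold does not rescue this. To force $h\le\sqrt n/\log n$ by double counting when $\alpha=\Theta(n/\log^4 n)$ you would need the threshold to be about $\sqrt n/\log^3 n$; but the certificate machinery of Lemma~\ref{mainlemmasidon2} is calibrated to $t_{\textrm{threshold}}=\sqrt n/\log^4 n$ in Phase 1, and the bound $|R_1|\le 108\sqrt n/\log n$ scales linearly with that threshold. Raising it by a $\log n$ factor makes $|R_1|=\Theta(\sqrt n)$, so the number of choices for $R_1$ alone becomes $\binom{n}{\Theta(\sqrt n)}=2^{\Theta(\sqrt n\log n)}$, destroying the $2^{O(\sqrt n)}$ certificate count. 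This trade-off is the substantive obstruction behind the conjecture; it cannot be dismissed as bookkeeping of constants.
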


The idea of our proofs is based on the graph container method, in which we assign a \textit{cerfiticate} to each set $I$ in $\mathcal{I}_n(\alpha)$ (or $\mathcal{J}_n(\alpha)$) such that $I$ is contained in a unique `container' determined by its certificate. The certificate should be sufficiently small so that the total number of certificates is properly bounded. Moreover, for each certificate, the number of sets $I$ assigned to it should not be large. Then we can estimate the size of $\mathcal{I}_n(\alpha)$ (or $\mathcal{J}_n(\alpha)$) by counting their certificates. Note that the classical graph container method only applies for the independent sets while we study on the sets with sparse structure. Therefore, we need to make some modifications of the argument. A closely related problem was studied in \cite{BL}, where the authors give an estimate on the number of graphs which contains only few 4-cycles.

Although we did not manage to achieve our goal in this paper, i.e., to prove Conjecture~\ref{conj}, our proof still contains a few new ideas which might be useful to attack some other problems.
The paper is organized as follows. In Section~\ref{superprob}, we present a supersaturation lemma and some probabilistic results to be used in Section~\ref{certifi}. In Section~\ref{certifi}, we introduce our certificate lemmas, Lemmas~\ref{mainlemmasidon} and \ref{mainlemmasidon2}, which are used to prove Theorem~\ref{sidonthm} and \ref{sidonthm2} respectively. The proofs of Theorems \ref{sidonthm} and \ref{sidonthm2} are given in Section~\ref{thmproof}. Finally, we have some concluding remarks in Section~\ref{remark}.
Throughout the paper, we omit all floor and ceiling signs whenever these are not crucial. All logarithms have base 2.

\section{Supersaturation and probabilistic tools}\label{superprob}
\subsection{Supersaturation}
For two sets $A, U\subseteq [n]$,  define a multigraph $H^U(A)$ on vertex set $A$ such that for every $a_1,a_2\in A$ with $a_1<a_2$, the multiplicity of the edge $a_1a_2$ in $H^U(A)$ is the number of ordered pairs $(u_1, u_2)$ in $U$ such that $(a_1, u_1, u_2, a_2)$ is a Sidon 4-tuple. We shall use the following simple supersaturation result.

\begin{lemma}\label{sidonedge}
Let $A, U\subseteq[n]$. If $|A|\cdot|U|\geq6n$, then $e(H^U(A))>\frac{|A|^2|U|^2}{12n}.$
\end{lemma}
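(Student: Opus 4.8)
The plan is to recognize $e(H^U(A))$ as essentially an additive-energy count and to estimate it by Cauchy--Schwarz, then discard the degenerate solutions. For $k\in\mathbb{Z}$ put $r(k)=\#\{(a,u)\in A\times U:a+u=k\}$, and set
\[
T:=\#\{(a_1,u_1,a_2,u_2)\in A\times U\times A\times U:\ a_1+u_1=a_2+u_2\}=\sum_{k}r(k)^2 .
\]
Since $\sum_k r(k)=|A|\,|U|$ and $r$ is supported on the sumset $A+U$, which is contained in an interval of integers of size less than $2n$ and hence has fewer than $2n$ elements, Cauchy--Schwarz gives
\[
T=\sum_{k\in A+U}r(k)^2>\frac{\big(\sum_k r(k)\big)^2}{2n}=\frac{|A|^2|U|^2}{2n}.
\]

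Next I would relate $T$ to $e(H^U(A))$. Unravelling the definition of $H^U(A)$ and using the involution $(u_1,u_2)\mapsto(u_2,u_1)$, which interchanges the contribution of the ordered pair $(a_1,a_2)$ with that of $(a_2,a_1)$ while preserving both the equation $a_1+u_1=a_2+u_2$ and the set $\{a_1,u_1\}\cap\{a_2,u_2\}$, one obtains
\[
2\,e(H^U(A))=\#\{(a_1,u_1,a_2,u_2)\in A\times U\times A\times U:\ a_1+u_1=a_2+u_2,\ \{a_1,u_1\}\cap\{a_2,u_2\}=\emptyset\}.
\]
Hence $2\,e(H^U(A))=T-D$, where $D$ counts those quadruples with $a_1+u_1=a_2+u_2$ and $\{a_1,u_1\}\cap\{a_2,u_2\}\neq\emptyset$. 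The elementary observation driving the bound on $D$ is that, given $a_1+u_1=a_2+u_2$, the two pairs fail to be disjoint exactly when either $a_1=a_2$ (equivalently $u_1=u_2$) or $a_1=u_2$ (equivalently $u_1=a_2$). The first case contributes exactly $|A|\,|U|$ quadruples, and the second at most $|A\cap U|^2\le|A|\,|U|$, so $D\le 2|A|\,|U|$.

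Finally I would combine the two estimates, using the hypothesis $|A|\,|U|\ge 6n$ in the form $2\le|A|\,|U|/(3n)$:
\[
2\,e(H^U(A))=T-D>\frac{|A|^2|U|^2}{2n}-2|A|\,|U|\ \ge\ \frac{|A|^2|U|^2}{2n}-\frac{|A|^2|U|^2}{3n}=\frac{|A|^2|U|^2}{6n},
\]
which yields $e(H^U(A))>\frac{|A|^2|U|^2}{12n}$. The whole argument is a short Cauchy--Schwarz computation; the only point demanding genuine care is the bookkeeping in the passage from $T$ to $2\,e(H^U(A))$ — correctly handling the factor $2$ from ordered versus unordered pairs in $A$, checking that the degenerate quadruples are precisely those listed, and confirming that $D$ is negligible against the main term (which is $\ge 3|A|\,|U|$ once $|A|\,|U|\ge 6n$).
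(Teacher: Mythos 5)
Your proof is correct and is essentially the paper's argument in additive-energy language: your Cauchy--Schwarz bound on $\sum_k r(k)^2$ is the paper's convexity count of paths of length $2$ through the sums $m\in[2n]$ in an auxiliary bipartite graph, and your removal of the degenerate quadruples (the diagonal $a_1=a_2$ contributing $|A|\,|U|$, and the $|A\cap U|^2\le |A|\,|U|$ term) matches the paper's subtraction of the $\binom{|A\cap U|}{2}$ trivial paths, with the same final use of $|A|\,|U|\ge 6n$. Nothing further is needed.
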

\noindent\textit{Proof.}
Let $F$ be a simple bipartite graph defined on the set $A\cup[2n]$ satisfying that for every $a\in A$ and $m\in [2n]$, $a$ is adjacent to $m$ if and only if there is an element $u\in U$ such that $a+u=m$. Clearly, for every vertex $a\in A$, we have $d_F(a)=|U|$. 

Let $\mathcal{P}$ be the set of  paths of length 2 (or 3-paths) in $F$ with endpoints in $A$. Then we have $$|\mathcal{P|}=\sum_{m\in[2n]}\binom{d_F(m)}{2}\geq 2n\binom{\frac{\sum_{m\in[2n]}d_F(m)}{2n}}{2}=2n\binom{\frac{|A|\cdot|U|}{2n}}{2}>\frac{|A|^2|U|^2}{6n}.$$
A path $P=\{xyz\}\in \mathcal{P}$ is called \textit{trivial} if $x+z=y$; otherwise, $P$ is \textit{nontrivial}. Note that $P$ is trivial if and only if both $x$ and $y$ belong to $A\cap U$. Thus, the number of trivial paths in $\mathcal{P}$ is exactly $\binom{|A\cap U|}{2}.$
Let $P'$ be the set of nontrivial paths in $P$. Every 3-path in $P'$ corresponds to an edge in $H^U(A)$ and vice versa. Therefore, we obtain 
\[
e(H^U(A))=|P'|=|P|-\binom{|A\cap U|}{2}>\frac{|A|^2|U|^2}{6n}-\frac{|A|\cdot|U|}{2}\geq\frac{|A|^2|U|^2}{12n},
\]
where the first inequality is given by $|A\cap U|\leq\min\{|A|, |U|\}\leq\sqrt{|A|\cdot|U|}$ and the second inequality follows from the assumption $|A|\cdot|U|\geq 6n$.
\qed
\begin{cor}\label{supersatu}
Let $A\subseteq[n]$ be a set with at most $3n$ Sidon 4-tuples. Then $|A|< \sqrt{6n}.$
\end{cor}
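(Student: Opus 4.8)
The plan is to apply Lemma~\ref{sidonedge} with $U=A$ and then translate the resulting lower bound on $e(H^A(A))$ into a statement about the number of Sidon 4-tuples in $A$. First I would unwind the definition of the multigraph: in $H^A(A)$, each unit of edge multiplicity on a pair $a_1a_2$ with $a_1<a_2$ records an ordered pair $(u_1,u_2)\in A^2$ for which $(a_1,u_1,u_2,a_2)$ is a Sidon 4-tuple, and distinct records correspond to distinct Sidon 4-tuples. Hence, writing $t$ for the number of Sidon 4-tuples in $A$, we get $e(H^A(A))\leq t\leq 3n$ by the hypothesis of the corollary. (In fact the involution $(a_1,u_1,u_2,a_2)\mapsto(a_2,u_2,u_1,a_1)$ shows $e(H^A(A))=t/2$, but the cruder bound is all that is needed.)

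Next I would argue by contradiction. Suppose $|A|\geq\sqrt{6n}$. Then $|A|\cdot|U|=|A|^2\geq 6n$, so the hypothesis of Lemma~\ref{sidonedge} is met, and the lemma gives
\[
e(H^A(A))>\frac{|A|^2|U|^2}{12n}=\frac{|A|^4}{12n}\geq\frac{(6n)^2}{12n}=3n.
\]
This contradicts $e(H^A(A))\leq 3n$ established above, so we must have $|A|<\sqrt{6n}$, as claimed.

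The only step needing a little care is the bookkeeping in the first paragraph: one should check that with $U=A$ every unit of multiplicity really does come from a genuine Sidon 4-tuple and that there is no degenerate overcounting — for instance, one cannot have $a_1=a_2$ in such a tuple, since $a_1+u_1=u_2+a_2$ would force $u_1=u_2$ and then $\{a_1,u_1\}\cap\{u_2,a_2\}\neq\emptyset$, contradicting the definition of a Sidon 4-tuple. Beyond this, the argument is a direct substitution into Lemma~\ref{sidonedge}, so I do not anticipate any real obstacle; this is a short consequence of the supersaturation lemma.
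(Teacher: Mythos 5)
Your proposal is correct and follows the paper's own route: apply Lemma~\ref{sidonedge} with $U=A$, identify the edge count of $H^A(A)$ with (a lower bound on) the number of Sidon 4-tuples, and conclude $|A|^4/(12n)<3n$. Your contradiction setup to verify the hypothesis $|A|\cdot|U|\geq 6n$ and the bookkeeping relating $e(H^A(A))$ to the 4-tuple count are just slightly more explicit versions of what the paper leaves implicit.
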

\begin{proof}
Apply Lemma~\ref{sidonedge} with $U=A$. Then we obtain that the number of Sidon 4-tuples in $A$ is more than $|A|^4/12n$. On the other hand, the assumption states that there are at most $3n$ Sidon 4-tuples, which indicates that $|A|^4/12n<3n$, i.e., $|A|<\sqrt{6n}$.
\end{proof}
\begin{lemma}\label{sidonmulti}
Suppose $I, A\subseteq[n]$ and $g\leq n$. For every set $U\subseteq \{v\in I \mid s_I(v)<g\}$ and edge $ab\in H^U(A)$, the multiplicity of $ab$ in $H^U(A)$ is at most $g$.
\end{lemma}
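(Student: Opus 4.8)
\noindent\emph{Proof plan.} The plan is to argue one edge at a time. Fix an edge $ab$ of $H^U(A)$ with $a<b$ and set $k:=b-a\geq 1$. By the definition of $H^U(A)$, the multiplicity of $ab$ is the number of ordered pairs $(u_1,u_2)\in U\times U$ for which $(a,u_1,u_2,b)$ is a Sidon $4$-tuple; but the equation $a+u_1=u_2+b$ forces $u_1=u_2+k$, so such a pair is completely determined by its second coordinate. Writing $W$ for the set of those $w\in U$ with $w+k\in U$ for which also $\{a,w+k\}\cap\{w,b\}=\emptyset$, we get that the multiplicity of $ab$ equals $|W|$, and it therefore suffices to prove $|W|\leq g$.

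The key step is to construct, for an arbitrarily fixed $w_0\in W$ (if $W=\emptyset$ there is nothing to do), an injection from $W\setminus\{w_0\}$ into $S_I(w_0)$. Given $w\in W$ with $w\neq w_0$, I would send it to the $4$-tuple $(w+k,\,w_0,\,w,\,w_0+k)$. All four of its entries lie in $U\subseteq I$, precisely because $w,w_0\in W$ guarantees $w,w+k,w_0,w_0+k\in U$; this is exactly why the resulting tuple is recorded by $S_I$, and the reason we never need $a$ or $b$ themselves to belong to $I$. Its two sides both equal $w+w_0+k$, and the disjointness $\{w+k,w_0\}\cap\{w,w_0+k\}=\emptyset$ holds because $k\geq 1$ together with $w\neq w_0$ rules out each of the four possible coincidences. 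Hence this is a genuine Sidon $4$-tuple contained in $I$ and containing $w_0$, i.e.\ a member of $S_I(w_0)$; and since $w$ is recovered as the first coordinate minus $k$, the assignment is injective.

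Putting the two steps together yields $|W|-1\leq s_I(w_0)$. Since $w_0\in U\subseteq\{v\in I:s_I(v)<g\}$ and all of these quantities are integers, we conclude $|W|\leq g$, which is the asserted bound on the multiplicity of $ab$. I do not expect a genuine obstacle here; the only points that call for care are the disjointness bookkeeping for the auxiliary $4$-tuple (and verifying it is counted by $S_I(w_0)$ rather than by some count involving $a$ or $b$), and the observation that the diagonal case $w=w_0$ must be excluded — which is harmless, costing merely the $-1$ above.
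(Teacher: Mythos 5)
Your proof is correct and is essentially the paper's argument: both parametrize the pairs realizing the multiplicity by a single coordinate (since their common difference is $b-a$), fix one such pair, and combine it with each of the others to produce distinct Sidon $4$-tuples in $I$ containing a fixed element of $U$, giving multiplicity $-\,1\leq s_I(\cdot)<g$. The only difference is cosmetic: you verify the disjointness condition for the auxiliary $4$-tuples explicitly, which the paper leaves implicit.
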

\begin{proof}
Let $m$ be the multiplicity of the edge $ab$ in $H^U(A)$. By the definition of $H^U(A)$, there exist $u_1, u_2, \ldots, u_m$, $v_1, v_2, \ldots, v_m\in U$ such that $a+u_i=v_i+b$, for every $i\in [m]$. Then for every $i\in[m]\setminus\{1\}$, we have $u_i-v_i=b-a=u_1-v_1$, i.e., $u_1+v_i=u_i+v_1$. Since $u_1\in U\subseteq \{v\in I \mid s_I(v)<g\}$, we must have $m-1\leq s_U(u_1)\leq s_I(u_1)<g$, that is, $m\leq g$.
\end{proof}
%

\subsection{Large deviations for sum of partly dependent random variables}
The classical Chernoff bound is a powerful tool, but it only applies to sums of random variables that are independent.
Janson~\cite{J} extended a method of Hoeffding and obtained strong large deviation bounds for sums of dependent random variables with suitable dependency structure.
For a family of random variables $\{Y_{\alpha}\}_{ \alpha\in\mathcal{A}}$, a \textit{dependency graph} is a graph $\Gamma$ with vertex set $\mathcal{A}$ such that if $\mathcal{B}\subset \mathcal{A}$ and $\alpha\in \mathcal{A}$ is not connected by an edge to any vertex in $\mathcal{B}$, then $Y_{\alpha}$ is independent of $\{Y_{\beta}\}_{ \beta\in\mathcal{B}}$.
Let $\Delta(\Gamma)$ denote the maximum degree of $\Gamma$ and let (for convenience) $\Delta_1(\Gamma):=\Delta(\Gamma)+1$.
\begin{thm}[\cite{J}, Corollary 2.2]\label{Jbound}
Suppose that $X$ is a random variable which can be written as a sum
$$X=\sum_{\alpha\in\mathcal{A}}Y_{\alpha},$$
where each $Y_{\alpha}$ is an indicator variable taking the values 0 and 1 only. Let $\Gamma$ be the dependency graph for $\{Y_{\alpha}\}_{ \alpha\in\mathcal{A}}$. Then for $t\geq 0$,
$$\mathbb{P}(X\geq \mathbb{E}[X]+t)\leq \exp\left(-2\frac{t^2}{\Delta_1(\Gamma)|\mathcal{A}|}\right).$$
\end{thm}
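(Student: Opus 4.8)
The plan is to prove this by the exponential-moment (Chernoff) method, the essential twist being to destroy the dependency by passing to a proper colouring of the dependency graph $\Gamma$. Since $\Gamma$ has maximum degree $\Delta(\Gamma)$, a greedy argument gives a proper colouring of its vertex set $\mathcal{A}$ with $\chi \le \Delta(\Gamma)+1 = \Delta_1(\Gamma)$ colours; write $\mathcal{A} = \mathcal{C}_1 \cup \cdots \cup \mathcal{C}_\chi$ for the colour classes, each of which is an independent set in $\Gamma$. The point I would check first is that for every $i$ the family $\{Y_\alpha\}_{\alpha \in \mathcal{C}_i}$ is \emph{mutually} independent: for any $\alpha \in \mathcal{C}_i$ the vertex $\alpha$ is nonadjacent to every vertex of $\mathcal{C}_i \setminus \{\alpha\}$, so by the defining property of a dependency graph $Y_\alpha$ is independent of $\{Y_\beta\}_{\beta \in \mathcal{C}_i \setminus \{\alpha\}}$; since this holds for each $\alpha$, the whole family is independent. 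Setting $X_i = \sum_{\alpha \in \mathcal{C}_i} Y_\alpha$, we thus have $X = \sum_{i=1}^\chi X_i$, with each $X_i$ a sum of independent $[0,1]$-valued indicators.

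Next I would estimate $\mathbb{E}\big[e^{s(X - \mathbb{E}X)}\big]$ for $s > 0$. By the generalized Hölder inequality applied to the $\chi$ functions $e^{s(X_i - \mathbb{E}X_i)}$, each with exponent $\chi$,
\[
\mathbb{E}\Big[e^{s(X - \mathbb{E}X)}\Big] = \mathbb{E}\Big[\prod_{i=1}^\chi e^{s(X_i - \mathbb{E}X_i)}\Big] \le \prod_{i=1}^\chi \Big(\mathbb{E}\big[e^{s\chi(X_i - \mathbb{E}X_i)}\big]\Big)^{1/\chi}.
\]
For each $i$, independence together with Hoeffding's lemma (for a random variable $Y \in [0,1]$ one has $\mathbb{E}[e^{\lambda(Y - \mathbb{E}Y)}] \le e^{\lambda^2/8}$) gives $\mathbb{E}\big[e^{s\chi(X_i - \mathbb{E}X_i)}\big] \le e^{s^2\chi^2|\mathcal{C}_i|/8}$. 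Substituting and using $\sum_i |\mathcal{C}_i| = |\mathcal{A}|$ yields $\mathbb{E}\big[e^{s(X - \mathbb{E}X)}\big] \le \exp\big(s^2 \chi |\mathcal{A}| / 8\big)$.

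Finally, applying Markov's inequality to $e^{s(X - \mathbb{E}X)}$,
\[
\mathbb{P}(X \ge \mathbb{E}X + t) \le e^{-st}\,\mathbb{E}\big[e^{s(X - \mathbb{E}X)}\big] \le \exp\!\Big(-st + \frac{s^2 \chi |\mathcal{A}|}{8}\Big),
\]
and optimizing the right-hand side over $s>0$ — the minimum is at $s = 4t/(\chi|\mathcal{A}|)$ — produces the exponent $-2t^2/(\chi|\mathcal{A}|)$. Since $\chi \le \Delta_1(\Gamma)$, the claimed bound $\exp\big(-2t^2/(\Delta_1(\Gamma)|\mathcal{A}|)\big)$ follows, and the case $t = 0$ is trivial.

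I do not expect a deep obstacle once the colouring idea is in place. The one step that genuinely uses the hypothesis is the passage from ``independent set in $\Gamma$'' to ``mutually independent family'', which relies on the dependency-graph condition in the stated form (independence of $Y_\alpha$ from an \emph{arbitrary} set of non-neighbours, not merely pairwise independence). A secondary point requiring care is invoking Hoeffding's lemma with the sharp constant $1/8$ for $[0,1]$-ranged variables: this is exactly what yields the factor $2$ in the exponent, and a cruder sub-Gaussian estimate would weaken the constant. One could instead avoid Hölder by an inductive argument peeling off one colour class at a time, but the Hölder route seems cleanest.
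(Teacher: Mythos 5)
Your argument is correct. Note, however, that the paper does not prove this statement at all: it is quoted verbatim from Janson's paper (Corollary 2.2 of \cite{J}), so there is no in-paper proof to compare against. What you have written is essentially a reconstruction of Janson's own argument: he proves the general Theorem 2.1 by covering $\mathcal{A}$ with independent sets of $\Gamma$ (in fact using a \emph{fractional} proper cover, which gives the bound with the fractional chromatic number $\chi^*(\Gamma)$ in place of $\Delta_1(\Gamma)$), applying Hoeffding's one-variable lemma with the sharp constant $1/8$ to each independent subfamily, and combining via a convexity/H\"older step before optimizing the exponential-moment bound; Corollary 2.2 then follows from $\chi^*(\Gamma)\leq \Delta(\Gamma)+1$. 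Your integer proper colouring with $\chi\leq\Delta_1(\Gamma)$ colours is a harmless specialization that yields exactly the stated bound, and your two points of care are the right ones: the dependency-graph hypothesis (independence of $Y_\alpha$ from an arbitrary set of non-neighbours, jointly) is what upgrades an independent set of $\Gamma$ to a mutually independent family, and the constant $1/8$ in Hoeffding's lemma is what produces the factor $2$ in the exponent.
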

\subsection{Some probabilistic results}
For this section, fix $\alpha=\frac{\sqrt{n}}{\log^4 n}$. Let $I$ be an $\alpha$-generalized Sidon set in $[n]$ such that for every $v\in I$, $s_I(v)< \frac{\sqrt{n}}{\log^3 n}$. Define $I_h=\{v\in I: s_I(v)\geq \frac{\sqrt{n}}{\log^4 n}\}$. We further assume that 
\begin{equation}\label{Wassum}
|I|\geq \frac{\sqrt{n}}{\sqrt{\log n}}\quad \text{and} \quad |I_h|> \frac{\sqrt{n}}{\log n}.
\end{equation}

From the Chernoff bound and (\ref{Wassum}), we instantly get the following.
\begin{lemma}\label{Wsize}
Let $W$ be a random subset of $I$ obtained by choosing each $u\in I$ independently with probability $p=\frac{2}{\sqrt{\log n}}$. Then
$\mathbb{P}\left(|W|<\frac{\sqrt{n}}{\log n}\right)=o(1).$
\end{lemma}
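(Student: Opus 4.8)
The plan is to treat $|W|=\sum_{u\in I}\mathbf{1}[u\in W]$ as a sum of independent $\{0,1\}$ indicators, each with success probability $p=\tfrac{2}{\sqrt{\log n}}$, and apply the standard Chernoff lower-tail estimate. First I would record the mean: $\mathbb{E}[|W|]=p\,|I|=\tfrac{2}{\sqrt{\log n}}\,|I|$, and then invoke the first inequality in (\ref{Wassum}), namely $|I|\geq \sqrt{n}/\sqrt{\log n}$, to conclude $\mathbb{E}[|W|]\geq \tfrac{2\sqrt{n}}{\log n}$. The point of this computation is that the target threshold $\sqrt{n}/\log n$ sits at most halfway between $0$ and $\mathbb{E}[|W|]$.

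Writing $\mu=\mathbb{E}[|W|]$, the event $\{|W|<\sqrt{n}/\log n\}$ is contained in $\{|W|\leq \tfrac12\mu\}=\{|W|\leq (1-\tfrac12)\mu\}$. Applying the multiplicative Chernoff bound $\mathbb{P}(|W|\leq(1-\delta)\mu)\leq \exp(-\delta^2\mu/2)$ with $\delta=\tfrac12$ gives $\mathbb{P}(|W|<\sqrt{n}/\log n)\leq \exp(-\mu/8)\leq \exp\!\bigl(-\tfrac{\sqrt{n}}{4\log n}\bigr)=o(1)$, which is the claim.

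There is no genuine obstacle: the only thing that needs checking is that the mean comfortably dominates the threshold, and this is precisely what the hypothesis $|I|\geq \sqrt{n}/\sqrt{\log n}$ delivers — the factor $p=2/\sqrt{\log n}$ is calibrated so that $p\,|I|$ exceeds $\sqrt{n}/\log n$ by a constant factor. If one prefers an additive bound, Hoeffding's inequality $\mathbb{P}(|W|\leq \mu-t)\leq \exp(-2t^2/|I|)$ with $t=\mu/2$ works just as well, yielding a bound of the form $\exp(-\Omega(\sqrt{n}/\log^{3/2}n))=o(1)$; either route settles the lemma.
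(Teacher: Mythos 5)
Your proposal is correct and is exactly the argument the paper intends: the paper simply states that the lemma follows ``instantly'' from the Chernoff bound and the assumption $|I|\geq \sqrt{n}/\sqrt{\log n}$ in (\ref{Wassum}), which is precisely your computation that $\mathbb{E}[|W|]=p|I|\geq 2\sqrt{n}/\log n$ dominates the threshold by a factor of $2$, followed by the standard lower-tail estimate. Nothing further is needed.
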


For two different numbers $u, v$ and a set $A$, let $S(u, A, v)=\{(u,a,b,v)\mid a, b\in A\text{ and } u+a=b+v \}$ and write $s(u, A, v)=|S(u, A, v)|$. 

 \begin{lemma}\label{Wsmulti}
Let $W$ be a random subset of $I$ obtained by choosing each $u\in I$ independently with probability $p=\frac{2}{\sqrt{\log n}}$. Then almost always $s(u, W, v)\leq 8\frac{\sqrt{n}}{\log^4 n}$, for all $u, v\in I$ simultaneously.
 \end{lemma}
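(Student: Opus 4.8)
The plan is to fix an (ordered) pair of distinct elements $u,v\in I$, reduce $s(u,W,v)$ to a sum of indicator variables, obtain an exponentially small upper tail for this fixed pair via Janson's inequality (Theorem~\ref{Jbound}), and finish with a union bound over all such pairs. Write $d=v-u\neq 0$; since $u+a=b+v$ forces $b=a-d$, the set $S(u,W,v)$ is in bijection with $\{a\in W:\ a-d\in W\}$, so
\[
s(u,W,v)=\sum_{a\in\mathcal A}Y_a,\qquad \mathcal A=\{a\in I:\ a-d\in I\},\quad Y_a=\mathbf{1}[a\in W]\cdot\mathbf{1}[a-d\in W].
\]
Each $Y_a$ is an indicator depending only on the two independent choices deciding whether $a\in W$ and whether $a-d\in W$, and $|\mathcal A|=s(u,I,v)$.

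Two estimates will feed the Janson bound. First, I would bound $|\mathcal A|=s(u,I,v)$: for every $a\in\mathcal A$ with $a\neq v$ one has $\{u,a\}\cap\{a-d,v\}=\emptyset$ (this uses $d\neq 0$ and $u\neq v$ to rule out coincidences), so $(u,a,a-d,v)$ is a genuine Sidon $4$-tuple in $I$ containing $u$; distinct values of $a$ give distinct such tuples, while the only element of $S(u,I,v)$ not of this form is the trivial tuple $(u,v,u,v)$. Hence $s(u,I,v)\le s_I(u)+1<\tfrac{\sqrt n}{\log^3 n}+1$, using the standing hypothesis that $s_I(w)<\sqrt n/\log^3 n$ for all $w\in I$. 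Since $d\neq 0$ gives $a\neq a-d$ and hence $\mathbb E[Y_a]=p^2$, this also yields $\mathbb E[s(u,W,v)]=p^2\,s(u,I,v)\le \tfrac{4}{\log n}\bigl(\tfrac{\sqrt n}{\log^3 n}+1\bigr)<\tfrac{5\sqrt n}{\log^4 n}$ for $n$ large. Second, a dependency graph $\Gamma$ for $\{Y_a\}_{a\in\mathcal A}$ is obtained by joining $a\sim a'$ precisely when $\{a,a-d\}\cap\{a',a'-d\}\neq\emptyset$; this forces $a'\in\{a,a+d,a-d\}$, so $\Delta(\Gamma)\le 2$ and $\Delta_1(\Gamma)\le 3$.

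Now, setting $t=\tfrac{8\sqrt n}{\log^4 n}-\mathbb E[s(u,W,v)]\ge \tfrac{3\sqrt n}{\log^4 n}$ and using $|\mathcal A|<\tfrac{2\sqrt n}{\log^3 n}$, Theorem~\ref{Jbound} gives
\[
\mathbb P\!\left(s(u,W,v)\ge \tfrac{8\sqrt n}{\log^4 n}\right)\le \exp\!\left(-\frac{2t^2}{\Delta_1(\Gamma)\,|\mathcal A|}\right)\le \exp\!\left(-\frac{2\,(3\sqrt n/\log^4 n)^2}{3\cdot 2\sqrt n/\log^3 n}\right)=\exp\!\left(-\frac{3\sqrt n}{\log^5 n}\right).
\]
A union bound over the at most $|I|^2\le n^2$ ordered pairs $(u,v)$ then bounds the probability that some pair violates the desired inequality by $n^2\exp(-3\sqrt n/\log^5 n)=o(1)$, because $\sqrt n/\log^5 n$ grows faster than $\log n$. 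This would complete the proof.

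The step I expect to be the main obstacle is the estimate $|\mathcal A|=s(u,I,v)\le s_I(u)+1$: one must check carefully that, apart from the single trivial tuple $(u,v,u,v)$, every element of $S(u,I,v)$ really is a Sidon $4$-tuple through $u$, so that the hypothesis $s_I(u)<\sqrt n/\log^3 n$ can be invoked — this is exactly where all the degenerate collisions among $u$, $a$, $a-d$, $v$ have to be excluded. Once $\mathcal A$ is known to be this small and $\Gamma$ has bounded maximum degree, the Janson tail is small enough that a union bound over $\le n^2$ pairs is comfortable, and the rest is routine manipulation of constants.
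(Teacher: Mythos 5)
Your proposal is correct and follows essentially the same route as the paper: decompose $s(u,W,v)$ into indicator variables, apply Janson's inequality (Theorem~\ref{Jbound}) with a dependency graph of maximum degree at most $2$, and finish with a union bound over at most $n^2$ pairs. Your direct estimate $s(u,I,v)\le s_I(u)+1$ is just an inlined version of the paper's appeal to Lemma~\ref{sidonmulti}, so the two arguments coincide in substance.
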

\noindent\textit{Proof.}
It is sufficient to prove the inequality for all $u, v\in I$ with $s(u, I, v)> \frac{8\sqrt{n}}{\log^4 n}$.
For a 4-tuple $r=(u, a, b, v)\in S(u, I, v)$, let $X_r$ be the indicator random variable for the event $r\in S(u, W, v)$. Since $a, b$ are always different, we have $\mathbb{P}(X_r=1)=p^2=\frac4{\log n}$.
Then 
$$\mu_{uv}=\mathbb{E}[s(u, W, v)]=\mathbb{E}\left[\sum_{r\in S(u, I, v)}X_r\right]=p^2s(u, I, v)>\frac{32\sqrt{n}}{\log^5 n}.$$ 
For a given pair of numbers $u, v\in I$, let $\Gamma$ be the dependency graph for $\{X_r: r\in S(u, I, v)\}$. Then we have $\Delta_1(\Gamma)=\Delta(\Gamma)+1\leq 3$.
Using Theorem~\ref{Jbound}, we show that
\begin{equation}\label{chernoffmulti}
\mathbb{P}(s(u, W, v)>2\mu_{uv})<\exp\left(-2\frac{\mu_{uv}^2}{3s(u, I, v)}\right)=\exp\left(-2\frac{\mu_{uv}}{3p^2}\right)<\exp\left(-\frac{16\sqrt{n}}{3\log^4 n}\right).
\end{equation}
On the other hand, by Lemma~\ref{sidonmulti}, we obtain
\begin{equation}\label{meanmulti}
\mu_{uv}=p^2s(u, I, v)\leq p^2\frac{\sqrt{n}}{\log^3 n}=\frac{4\sqrt{n}}{\log^4 n}.
\end{equation}
Combining (\ref{chernoffmulti}) and (\ref{meanmulti}), we obtain
$$\mathbb{P}\left(s(u, W, v)>8\frac{\sqrt{n}}{\log^4 n}\right)<\exp\left(-\frac{16\sqrt{n}}{3\log^4 n}\right).$$
Finally, using the union bound, we have
\[
\pushQED{\qed}
\mathbb{P}\left(\exists u, v\in I \text{ s.t. }s(u, W, v)>8\frac{\sqrt{n}}{\log^4 n}\right)<n^2\exp\left(-\frac{16\sqrt{n}}{3\log^4 n}\right)=o(1).
\qedhere
\popQED
\]

For two sets $B\subseteq A\subseteq [n]$ and a vertex $v\in A$, let $S_{A, B}(v)=\{(a, b, c, d)\in S_A(v) \mid v\in \{a, d\} \text{ and } b, c\in B\}$ and write $s_{A, B}(v)=|S_{A, B}(v)|$. Note that for a Sidon 4-tuple $(a, b, c, d)$, we can switch the $a, b$ and $c, d$ and the resulting tuple is still a Sidon 4-tuple. Therefore, we have $s_{A, A}(v)=\frac12 s_A(v).$
\begin{lemma}\label{Wdegree}
Let $W$ be a random subset of $I$ obtained by choosing each $u\in I$ independently with probability $p=\frac{2}{\sqrt{\log n}}$. Let $S(W)=\{v\in I\mid s_{I, W}(v)>\frac{\sqrt{n}}{\log^4 n}\}$. Then $|S(W)|\leq\frac{16\sqrt{n}}{\log n}$ almost always.
\end{lemma}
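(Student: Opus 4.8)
\emph{Proof strategy.} The plan is to control, in place of the individual quantities $s_{I,W}(v)$, their sum
$$X:=\sum_{v\in I}s_{I,W}(v).$$
Every $v\in S(W)$ contributes more than $\sqrt n/\log^4 n$ to $X$, so the deterministic implication ``$X\le 16n/\log^5 n$'' $\Rightarrow$ ``$|S(W)|<16\sqrt n/\log n$'' holds. Hence it suffices to show $X\le 16n/\log^5 n$ almost always, and I would prove this via Janson's inequality (Theorem~\ref{Jbound}).

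First I rewrite $X$ as a sum of indicators. Since $W\subseteq I$ we have $s_{I,W}(v)=\sum_{r}\mathbf 1[b_r\in W]\mathbf 1[c_r\in W]$, the sum over $r=(a,b,c,d)\in S_{I,I}(v)$, where $b_r:=b$ and $c_r:=c$ are the two inner coordinates of $r$ (and $b_r\ne c_r$, as $r$ is a Sidon $4$-tuple). Thus $X=\sum_{(v,r)\in\mathcal A}Y_{(v,r)}$, where $\mathcal A=\{(v,r):v\in I,\ r\in S_{I,I}(v)\}$ and $Y_{(v,r)}=\mathbf 1[b_r\in W]\mathbf 1[c_r\in W]$ is a $\{0,1\}$-indicator with $\mathbb E[Y_{(v,r)}]=p^2$. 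Using $s_{I,I}(v)=\tfrac12 s_I(v)$, the bound $\sum_{v}s_I(v)\le 4\alpha$ (each of the at most $\alpha$ Sidon $4$-tuples has at most four distinct entries), and $p^2=4/\log n$, $\alpha=n/\log^4 n$, we obtain
$$\mathbb E[X]=p^2|\mathcal A|=\tfrac{p^2}{2}\sum_{v\in I}s_I(v)\le 2p^2\alpha=\frac{8n}{\log^5 n},\qquad |\mathcal A|=\tfrac12\sum_{v\in I}s_I(v)\le 2\alpha.$$

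Next I take for $\Gamma$ the graph on $\mathcal A$ that joins $(v,r)$ and $(v',r')$ precisely when $\{b_r,c_r\}\cap\{b_{r'},c_{r'}\}\ne\emptyset$; since the events $\{u\in W\}$, $u\in I$, are mutually independent, this is a legitimate dependency graph for $\{Y_{(v,r)}\}$. The crucial point is that $\Gamma$ has small maximum degree. A neighbour $(v',r')$ of $(v,r)$ must have $b_r$ or $c_r$ as the second or third coordinate of $r'$; the number of Sidon $4$-tuples with a prescribed value $w$ in a prescribed coordinate is at most $s_I(w)<\sqrt n/\log^3 n$ by the standing hypothesis of this subsection, and there are two choices of which outer coordinate of $r'$ equals $v'$. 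Hence $\Delta(\Gamma)\le 8\sqrt n/\log^3 n$, so that $\Delta_1(\Gamma)|\mathcal A|=O(n^{3/2}/\log^7 n)$.

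Finally I apply Theorem~\ref{Jbound} with $t=8n/\log^5 n$:
$$\mathbb P\bigl(X\ge \mathbb E[X]+t\bigr)\le \exp\!\left(-\frac{2t^2}{\Delta_1(\Gamma)|\mathcal A|}\right)\le \exp\!\left(-\Omega\!\left(\frac{\sqrt n}{\log^3 n}\right)\right)=o(1),$$
so almost always $X\le \mathbb E[X]+t\le 16n/\log^5 n$, which completes the argument by the reduction above. I expect the main obstacle to be recognising that one cannot simply run a first-moment computation on $|S(W)|$ directly: a vertex with $s_I(v)$ of order $\sqrt n/\log^3 n$ has $\mathbb E[s_{I,W}(v)]$ of order $\sqrt n/\log^4 n$ and therefore lies in $S(W)$ with probability bounded away from $0$, and there can be $\Theta(\sqrt n/\log n)$ such vertices, so Markov applied to $|S(W)|$ yields only probability about $\tfrac12$. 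Aggregating into $X$ and using Janson repairs this, but only because the degree bound for $\Gamma$ exploits the pointwise hypothesis $s_I(v)<\sqrt n/\log^3 n$, which is genuinely stronger than the threshold $\sqrt n/\log^4 n$ that defines $I_h$.
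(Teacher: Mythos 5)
Your proposal is correct and follows essentially the same route as the paper: the paper likewise aggregates the quantities $s_{I,W}(v)$ into the global count $R_W=\bigcup_{v\in I}S_{I,W}(v)$ (your $X$ is just this count with the endpoint label, so $X\approx 2|R_W|$), bounds its expectation via the total number of Sidon $4$-tuples, applies Theorem~\ref{Jbound} with the same dependency graph (tuples adjacent when their inner coordinates meet, degree at most $2\max_v s_I(v)<2\sqrt{n}/\log^3 n$ up to your factor of bookkeeping), and then converts the high-probability bound $|R_W|\leq 8n/\log^5 n$ into $|S(W)|\leq 16\sqrt{n}/\log n$ exactly as you do. The only cosmetic differences are that the paper bounds the deviation by $2\mathbb{E}[|R_W|]$ (using the lower bound on $|R|$ coming from $|I_h|>\sqrt{n}/\log n$) whereas you use a fixed additive $t=8n/\log^5 n$, which is equally valid.
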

\begin{proof}
 Let $R=\bigcup_{v\in I}S_{I, I}(v)$. Then we have 
 \begin{equation}\label{Rsize}
 \frac{n}{\log^4 n}\geq |R|= \frac12\sum_{v\in I}s_{I, I}(v)=\frac14\sum_{v\in I}s_{I}(v)\geq \frac14\sum_{v\in I_h}\frac{\sqrt{n}}{\log^4 n}\geq\frac{n}{4\log^5 n},
 \end{equation}
 where the last inequality holds by (\ref{Wassum}).
Let $R_W=\bigcup_{v\in I}S_{I, W}(v)$. 
 For every $r\in R$, let $X_r$ be the indicator random variable for the event $r\in R_W$. Note that $\mathbb{P}(X_r=1)=p^2$. Then we obtain
$\mathbb{E}[|R_W|]=\mathbb{E}\left[\sum_{r\in R}X_r\right]=p^2|R|.$
Define a simple graph $\Gamma=(R, E)$ such that $$E=\{r_1r_2 \in \binom{R}{2} \mid r_1=(a_1, b_1, c_1, d_1),\ r_2=(a_2, b_2, c_2, d_2)\text{ and }\{b_1, c_1\}\cap\{b_2, c_2\}\neq \emptyset\}.$$
For every $r=(a, b, c, d)\in R$, the number of its neighbors in $\Gamma$ is at most $s_I(b)+s_I(c)< \frac{2\sqrt{n}}{\log^3 n}$, which implies 
\begin{equation}\label{Delta}
\Delta_1(\Gamma)=\Delta(\Gamma)+1\leq \frac{2\sqrt{n}}{\log^3 n}.
\end{equation}
The graph $\Gamma$ can be viewed as the dependency graph of $\{X_r\}_{r\in R}$, since $X_{r_1}, X_{r_2}$ are dependent if and only if $r_1r_2\in E$. 
By Theorem~\ref{Jbound}, we have
\begin{equation*}
\begin{split}
P(R_W\geq 2\mathbb{E}[R_W])&\leq \exp\left(-2\frac{(\mathbb{E}[R_W])^2}{\Delta_1(\Gamma)|R|}\right)=\exp\left(-2\frac{p^4|R|}{\Delta_1(\Gamma)}\right)\\
&\leq \exp\left(-2\frac{ \frac{2^4}{\log^2 n}\cdot\frac{n}{4\log^5 n}}{\frac{2\sqrt{n}}{\log^3 n}}\right)=\exp\left(-\frac{4\sqrt{n}}{\log^4 n}\right),
\end{split}
\end{equation*}
i.e.,
$$|R_W|<2\mathbb{E}[R_W]=2p^2|R|\leq \frac{8n}{\log^5 n}$$
 almost always. Finally, we obtain
$$|S(W)|\leq \frac{2|R_W|}{\frac{\sqrt{n}}{\log^4 n}}\leq \frac{16\sqrt{n}}{\log n}$$
almost always.
\end{proof}

\section{Certificate lemmas}\label{certifi}
In this section, we aim to prove two lemmas which are used to define proper certificates for the desired sets. For the proof of Theorem~\ref{sidonthm}, we introduce Lemma~\ref{mainlemmasidon} as the certificate lemma. A minor modification of its proof gives Lemma~\ref{mainlemmasidon2}, which is used to prove Theorem~\ref{sidonthm2}.
The original proof idea comes from Kleitman and Winston \cite{kleitmanwinston}, who estimated the number of $C_4$-free graphs. Kohayakawa, Lee, R\" odl and Samotij \cite{KLRS} later applied this method to the Sidon problem and gave an upper bound on the number of Sidon sets in $[n]$. 
\begin{lemma}\label{mainlemmasidon}
For a sufficiently large integer $n$, let $\alpha=n/\log^4n$ and $I$ be an $\alpha$-generalized Sidon set in $[n]$ such that for every $v\in I$, $s_I(v)< \sqrt{n}/\log^3 n$. Further assume that the size of $I$ is at least $\sqrt{n}/\sqrt{\log n}$.
Then there exist set sequences $R_0, R_1, \ldots, R_{L}$ and $U_0, U_1, \ldots, U_{L-1}$, where $0\leq L< \log \log n+1$, which determine a unique set sequence $C_0\supset C_1\supset C_2\supset \ldots\supset C_L$. Furthermore, the following are all satisfied:
\begin{enumerate}[label={\upshape(\roman*)}]
 \item $\bigcup_{i=0}^{L} R_i\subseteq I\subseteq C_L\cup \bigcup_{i=0}^{L} R_i$;
  \item $|C_0|\leq n$ and $12\sqrt{n}<|C_i|\leq \frac{6\sqrt{n}\log n}{2^{i-1}}$, for $i=1, 2,\ldots, L-1$;
  \item $R_0\subseteq [n]$ and $|R_0|\leq \frac{16\sqrt{n}}{\log n}$;
  \item $R_i\subseteq C_{i-1}$, $|R_1|\leq\frac{108\sqrt{n}}{\log n}$ and $|R_i|\leq \frac1{2^{2i-4}}\frac{12\sqrt{n}}{\log n}$, for $i=2,\ldots, L$;
 \item $U_0\subseteq [n]$ and $|U_0|=\frac{\sqrt{n}}{\log n}$;
 \item $U_i\subseteq C_i$ and $|U_i|=12\frac{n}{|C_i|}$, for $i=1,\ldots, L-1$;
 \item $L=0$ and $|C_0\cap I|<\frac{\sqrt{n}}{\log n}$ or $|C_L\cap I|<12\frac{n}{|C_L|}$ or $|C_L|\leq 12\sqrt{n}$.
\end{enumerate}
\end{lemma}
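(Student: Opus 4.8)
The plan is to run an iterated version of the Kleitman--Winston graph container argument (as in \cite{kleitmanwinston}, and \cite{KLRS} for the Sidon setting), modified for the fact that $I$ is only \emph{approximately} independent: the auxiliary multigraphs built below are spanned by at most $O(\alpha)$ edges coming from $I$, rather than by none. The certificate of $I$ will consist of the fingerprint sets $R_0,\dots,R_L$ together with the auxiliary sets $U_0,\dots,U_{L-1}$; the containers $C_0\supset\dots\supset C_L$ will then be recovered deterministically from these data by replaying the peeling process that produces them, so the uniqueness requirement is automatic.

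\emph{Initialisation.} Take a $p$-random subset $W\subseteq I$ with $p=2/\sqrt{\log n}$. By Lemmas~\ref{Wsize}, \ref{Wsmulti} and \ref{Wdegree} (the last replaced by the trivial inclusion $S(W)\subseteq I_h$ when $|I_h|\le\sqrt n/\log n$) we may fix an outcome $W$ with $|W|\ge\sqrt n/\log n$, with $s(u,W,v)\le 8\sqrt n/\log^4 n$ for all $u,v\in I$, and with $|S(W)|\le 16\sqrt n/\log n$ for $S(W)=\{v\in I:s_{I,W}(v)>\sqrt n/\log^4 n\}$. Put $R_0:=S(W)$, $C_0:=[n]\setminus R_0$, and fix $U_0\subseteq W$ with $|U_0|=\sqrt n/\log n$; then (ii), (iii), (v) hold, every $v\in C_0\cap I$ satisfies $s_{I,W}(v)\le\sqrt n/\log^4 n$, and by Lemma~\ref{Wsmulti} every edge of $H^{U_0}(C)$ has multiplicity at most $8\sqrt n/\log^4 n$, for any $C\subseteq[n]$.

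\emph{The rounds.} Suppose $C_{i-1}$ has been built together with an auxiliary set $U_{i-1}$ (equal to $U_0$ when $i=1$, and a subset of $C_{i-1}\cap I$ of size $12n/|C_{i-1}|$ when $i\ge 2$). If $|C_{i-1}|\le 12\sqrt n$, or $|C_{i-1}\cap I|$ is too small for such a $U_{i-1}$ to exist, we stop --- this is exactly alternative (vii). Otherwise, Lemma~\ref{sidonedge} applied to every sufficiently large subset $A$ of $C_{i-1}$ shows $H^{U_{i-1}}(C_{i-1})$ is locally dense: each such $A$ has a vertex of degree at least a threshold $q_i$, which works out to $q_1=\sqrt n/\log n$ for the first round and to a quantity of order $n/|C_{i-1}|$ afterwards. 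Now run the degree-greedy peeling: repeatedly delete a current maximum-degree vertex $v$ as long as its degree is $\ge q_i$, placing $v$ in $R_i$ when $v\in I$ and deleting, when $v\in I$, also those neighbours of $v$ the process is licensed to delete. When the process halts the surviving set $C_i$ has maximum degree $<q_i$, so the density statement forces $|C_1|\le 6\sqrt n\log n$ and $|C_i|\le\tfrac12|C_{i-1}|$ for $i\ge2$; since $|C_i|$ then decreases geometrically from $6\sqrt n\log n$ to $12\sqrt n$, there are at most $\log\log n+1$ rounds, giving (ii) and $L<\log\log n+1$. Finally fix $U_i\subseteq C_i\cap I$ with $|U_i|=12n/|C_i|$ and iterate. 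The containment $I\cap C_{i-1}\subseteq R_i\cup C_i$, and the fact that $C_i$ is determined by $C_{i-1}$, $U_{i-1}$ and $R_i$ alone, both follow from the standard container replay argument, using that every element of $(I\cap C_{i-1})\setminus R_i$ is removed in the original run only as a licensed neighbour of an element of $R_i$ and is never selected.

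\emph{The main obstacle.} The delicate point is the fingerprint bounds $|R_1|\le 108\sqrt n/\log n$ and $|R_i|\le 12\sqrt n/(2^{2i-4}\log n)$ for $i\ge 2$ (that for $|R_0|$ is handed to us by Lemma~\ref{Wdegree}). In the classical independent-set setting each selected vertex pays for itself by destroying at least $q_i$ edges, but here a selected $v\in I$ may have many of its neighbours inside $I$, so one cannot blithely delete all of them; the $I$-vertices one does delete as neighbours have to be reabsorbed into $R_i$, and keeping their number small is the whole game. The resolution must combine (a) that $I$ is $\alpha$-generalized Sidon, so the total multiplicity of edges of $H^{U_{i-1}}(C_{i-1})$ spanned by $I$ is only $O(\alpha)=O(n/\log^4 n)$; (b) the property secured in the initialisation that after deleting $R_0$ every surviving $v\in I$ has $s_{I,W}(v)\le\sqrt n/\log^4 n$, which caps how much of such a $v$'s degree can point back into $I$; and (c) the multiplicity bounds of Lemmas~\ref{sidonmulti} and, for the first round, \ref{Wsmulti}. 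Tuning the thresholds $q_i$ and the sizes $|U_i|=12n/|C_i|$ so that the number of genuine selections and the number of inadvertently deleted $I$-vertices both stay within the geometric budget of (iv), and so that the stopping conditions (vii) are reached before any required $U_i$ fails to exist, is the technical core, and the step I expect to be hardest to get exactly right.
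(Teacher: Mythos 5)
Your overall architecture --- initialize via the random set $W$ and Lemmas~\ref{Wsize}, \ref{Wsmulti}, \ref{Wdegree} (or $R_0=I_h$ when $|I_h|\le\sqrt n/\log n$), then run an iterated Kleitman--Winston peeling in phases with $|U_0|=\sqrt n/\log n$ and $|U_i|=12n/|C_i|$, with Lemma~\ref{sidonedge} supplying the max-degree lower bound and the containers recovered by deterministic replay --- matches the paper. But there is a genuine gap exactly where you flag it: you never say which neighbours of a selected vertex the process is ``licensed'' to delete, and under any neighbourhood-deletion scheme some deleted vertices may lie in $I$, which would break condition (i) and the replay; your proposed repair (reabsorbing such vertices into $R_i$ and ``tuning'' so their number is small) is left unexecuted and is not what makes the argument work. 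The missing idea is that the algorithm should delete no neighbours at all. Instead it maintains a cumulative state $t(v)$, adding $d_H(v,u)$ each time a vertex $u\in I$ is selected (where $H=H^{U}(A)$ with $U\subseteq I$), and removes $v$ only when $t(v)$ exceeds a threshold: $\sqrt n/\log^4 n$ in Phase 1 and $\sqrt n/\log^3 n$ in later phases. Since $U\subseteq I$ and all selected vertices lie in $I$, $t(v)$ counts Sidon $4$-tuples of $I$ through $v$; the choice of $R_0$ guarantees $s_{I,U_0}(v)\le\sqrt n/\log^4 n$ for every surviving $v\in I$ in Phase 1, and the hypothesis $s_I(v)<\sqrt n/\log^3 n$ covers the later phases, so any vertex whose state crosses the threshold is provably \emph{not} in $I$. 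Hence the only vertices of $I$ ever removed are the selected ones, condition (i) is automatic, and there is nothing to reabsorb.

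The fingerprint bounds (iv) then come not from tuning but from double counting $\sum_{v}t(v)$: each selection contributes at least the supersaturation bound $|A_i||U|^2/(12n)$, while each vertex's final state is at most the threshold plus one edge multiplicity, the latter at most $8\sqrt n/\log^4 n$ resp.\ $\sqrt n/\log^3 n$ by Lemma~\ref{Wsmulti} (or Lemma~\ref{sidonmulti} in Case 1) resp.\ Lemma~\ref{sidonmulti}. In Phase 1 one additionally needs a dyadic decomposition of the shrinking set $A$ into at most $\tfrac12\log n$ scales, because the degree lower bound degrades as $|A|$ shrinks while the state upper bound is proportional to $|A|$; this gives at most $216\sqrt n/\log^2 n$ selections per scale and hence $|R_1|\le108\sqrt n/\log n$, and the analogous computation with $|U_{j-1}|=12n/|C_{j-1}|$ gives the geometric bounds on $|R_j|$ for $j\ge2$. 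Your ingredients (a)--(c) are indeed the ones used, but without the state-function mechanism they do not yield conditions (i) and (iv), so as written the proposal does not prove the lemma.
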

We say the set sequences $R_0, R_1, \ldots, R_{L}$ and $U_0, U_1, \ldots, U_{L-1}$ founded in Lemma~\ref{mainlemmasidon} give a \textit{certificate} for $I$. Conditions (\rom{2})--(\rom{5}) guarantee that the number of such certificates is properly bounded. Condition (vii) guarantees that a fixed certificate is associated to small number of sets $I$. This follows from the fact that the most part of $I$ is contained in $C_L$.
\newline

\noindent\textit{Proof of Lemma~\ref{mainlemmasidon}.}
Fix a sufficiently large  integer $n$. Following the ideas of \cite{kleitmanwinston} and \cite{KLRS}, we gave a deterministic algorithm that associates every set $I$ to the desired set sequences. 
\newline 

\noindent\textbf{The core algorithm.} We start with sets $A\subseteq[n]$, $T=\emptyset$ and a function $t(v)=0$, for every $v\in A$. 
Here, one can view $A$ as the set of `available' vertices, $T$ as the set of `selected' vertices, and $t(v)$ as a `state' function which is used to control the process. 
As the algorithm proceeds, we add `selected' vertices from $A$ to $T$ and remove `ineligible' vertices from $A$, whose `state' value exceed some predetermined threshold $t_{\rm{threshold}}$.
More formally, take the auxiliary graph $H$ ($H=H^U(A)$ for some set $U$ and we will discuss the choice of $U$ later) and choose a  vertex $u\in A$ of maximum degree in $H[A]$; we break ties arbitrarily by giving preference to vertices that come early in some arbitrarily predefined ordering. If $u\notin I$, then let $T=T$, $A=A-u$ and $t(v)=t(v)$, for every $v\in A$. Otherwise, let 
  $$t(v)=\left\{\begin{array}{ll} 
    t(v)+d_H(v, u) & \text{for } v\in A,\\
    t(v)  & \text{for } v\notin A,
    \end{array}\right.$$
and define $Q=\{v\in A \mid t(v)>t_{\textrm{threshold}}\}$; let $T=T\cup\{u\}$ and $A=A-u-Q$. We stop the algorithm when $A$ is sufficiently small.
\newline

The goal of the algorithm is to obtain a small representative set $T$ for a given set $I$ such that the choice of $T$ determines a set $A\supseteq I-T$. If $A$ is sufficiently small, then it reduces the number of choices for $I-T$, and hence for $I$.
Note that in each round $T$ increases by at most 1. Therefore, a good algorithm should reduce the size of $A$ rapidly so that we can keep $T$ small in the end.
Recall that in every step, we take a vertex $u$ of maximum degree in the auxiliary graph $H^U(A)$ and add it to $T$ when $u\in I$. After that, we delete `ineligible' vertices, whose `state' exceed the given threshold. The idea behind this is that if the degree of a vertex is larger than the threshold, then it does not belong to $I$, since for every $v\in I$, $s_I(v)$ is bounded.
To speed up the process, we should take a large set $U$ so that we could quickly accumulate the `state' value and produce more `ineligible' vertices in each step. However, the cost of using a larger set $U$ is that the number of choices for $U$ becomes larger and so for the certificates. Therefore, we need to find a balance between the demand for large $U$ and the small number of choices for $U$.
Moreover, ideally if we can find one proper set $U$ through the whole algorithm, then the certificates would be much more concise than in our current lemma. Unfortunately, it turns out that $U$ must vary as the set $A$ shrinks in order to reach the condition of the supersaturation result.

For $i\geq 0$, let $A_i$, $T_i$ and $t_i(v)$ be the state after running the algorithm $i$ rounds. 
In the rest of the proof, we divide the iterations of the core algorithm into several phases and then choose a proper auxiliary set $U$ for each phase. In Phase 1, we execute the algorithm from $A_0=[n]$ to $A_{\ell_1}$, which is the first set $A_i$ of size smaller than $6\sqrt{n}\log n$. For $j\geq 2$, Phase $j$ consists of the executions of the algorithm between $A_{\ell_{j-1}}$, the set produced at the end of Phase $j-1$, and $A_{\ell_j}$, which is the first set $A_i$ of size smaller than $|A_{\ell_{j-1}}|/2$. 
\newline

\noindent\textbf{Set-ups for initial certificate $\mathbf{\{R_0, C_0\}}$.} Let $I_l=\{v\in I: s_I(v)< \frac{\sqrt{n}}{\log^4 n}\}$ and $I_h=\{v\in I: s_I(v)\geq \frac{\sqrt{n}}{\log^4 n}\}$. Based on the size of $I_h$, we have two different set-ups for $R_0$ and $C_0$.
\newline

\noindent\textbf{Case 1.} If $|I_h|\leq \frac{\sqrt{n}}{\log n}$, then we define: 
\begin{equation*}
R_0=I_h, \quad \quad C_0=[n]-R_0.
\end{equation*}

\noindent\textbf{Case 2.} If $|I_h|> \frac{\sqrt{n}}{\log n}$, Lemmas~\ref{Wsize},  \ref{Wsmulti} and \ref{Wdegree} indicate that there exists a set $W\subseteq I$ of size $\frac{\sqrt{n}}{\log n}$ such that 
\begin{equation}\label{Wset}
|S(W)|\leq \frac{16\sqrt{n}}{\log n}
\end{equation} and 
\begin{equation}\label{Wmulti}
s(u, W, v)\leq 8\frac{\sqrt{n}}{\log^4 n},\quad \text{for all }u, v\in I.
\end{equation}
Then we define: $$R_0=S(W), \quad \quad C_0=[n]-R_0.$$
\newline
\noindent\textbf{Phase 1.} If $|C_0\cap I|< \frac{\sqrt{n}}{\log n}$, then we stop the algorithm with $L=0$. Otherwise, take a set $U_0\subseteq [n]$ of size $\frac{\sqrt{n}}{\log n}$: for Case 1, let $U_0$ be an arbitrary subset of $C_0\cap I$ of size $\frac{\sqrt{n}}{\log n}$; for Case 2, let $U_0=W$. Denote $H_0=H^{U_0}(A)$. We now use $H_0$ as an auxiliary graph and run the core algorithm with 
$t_{\textrm{threshold}}=\frac{\sqrt{n}}{\log^4 n}$
 and initial state $$A_0=C_0,\quad T_0=\emptyset \quad \text{and}\quad t_0(v)=0,\;\text{for every}\; v\in A_0,$$ until we obtain the set $A_{\ell_1}$, the first set of size smaller than $6\sqrt{n}{\log n}$.
\newline

Let $K$ be the integer such that $\frac{n}{2^{K}}\leq|A_{\ell_1}|< \frac{n}{2^{K-1}}.$  By the choice of $A_{\ell_1}$, we have $K\leq \frac12 \log n$. 
For every integer $1\leq k\leq K$, let $A^k$ be the first set satisfying 
$\frac{n}{2^{k}}\leq|A^k|< \frac{n}{2^{k-1}}$
 if it exists, $T^k$ be the corresponding $T$-set of $A^k$ and $t^k(v)$ be the corresponding $t$-function. 
Note that $A^k$ may not exist for every $k$. Moreover, $A^{K}$ always exists and it could be $A_{\ell_1}.$
Suppose $$A^{k_1}\supset A^{k_2}\supset\ldots\supset A^{k_{p}},\quad p\leq K\leq \frac12\log n$$ are all the well-defined $A^k$.
From the definition, we obtain that $A^{k_1}=A_0$, $T^{k_1}=T_0=\emptyset$ and $k_p=K$. We additionally define $A^{k_{p+1}}=A_{\ell_1}$ and $T^{k_{p+1}}=T_{\ell_1}$. Then we have $$T_{\ell_1}=T^{k_{p+1}}=\bigcup_{j=2}^{p+1}(T^{k_j}-T^{k_{j-1}}).$$
Now we shall give an estimation on the size of each $T^{k_j}-T^{k_{j-1}}.$

During the process, the algorithm ensures that $t^{k_j}(v)\leq \frac{\sqrt{n}}{\log^4 n}$, for every $v\in A^{k_j}\cup T^{k_j}$. For every $v\in A^{k_{j-1}}-(A^{k_j}\cup T^{k_j})$, suppose $v$ was removed from $A^{k_{j-1}}$ in the $i$-th round and let $u_i$ denote the selected vertex in the round. Then we obtain that 
$$t^{k_j}(v)\leq t_{i-1}(v)+d_{H_0}(v, u_i)\leq \frac{\sqrt{n}}{\log^4 n}+d_{H_0}(v, u_i)\leq\frac{\sqrt{n}}{\log^4 n}+\frac{8\sqrt{n}}{\log^4 n}=\frac{9\sqrt{n}}{\log^4 n},$$ where the last inequality is given by Lemma~\ref{sidonmulti} and (\ref{Wmulti}). Therefore, we have
\begin{equation}\label{ubsidon}
\sum_{v\in A^{k_{j-1}}}t^{k_j}(v)\leq\frac{9\sqrt{n}}{\log^4 n}|A^{k_{j-1}}|<\frac{9\sqrt{n}}{\log^4 n}\frac{n}{2^{k_{j-1}-1}}.
\end{equation}

On the other hand, we can also estimate $\sum_{v\in A^{k_{j-1}}}t^{k_j}(v)$ from the view of `selected' vertices.
Let $2\leq j\leq p$. Take a vertex $u_i\in T^{k_j}-T^{k_{j-1}}$ and suppose that $u_i$ is selected in the $i$-th  round, i.e., from $A_i$. Since $A^{k_j}$ is the first set of size smaller than $\frac{n}{2^{k_{j-1}}}$, we have $|A_i|\geq \frac{n}{2^{k_{j-1}}}$ and then  $|A_i||U_0|\geq 6\sqrt{n}\log n\cdot \frac{\sqrt{n}}{\log n}=6n$. 
By Lemma $\ref{sidonedge}$, we obtain that
$$d_{H_0[A_i]}(u_i)\geq \frac{|A_{i}||U_0|^2}{12n}\geq \frac{n}{12\cdot2^{k_{j-1}}\log^2 n}.$$ 
Since $d_{H_0[A_i]}(u_i)$ does not contribute to $t^{k_j}(v)$ for $v\notin A^{k_{j-1}}$, we have 
 \begin{equation}\label{lbsidon}
 \sum_{v\in A^{k_{j-1}}}t^{k_j}(v)
\geq\sum_{u_i\in T^{k_j}-T^{k_{j-1}}}d_{H_0[A_i]}(u_i)
\geq\left|T^{k_j}-T^{k_{j-1}}\right|\frac{n}{12\cdot2^{k_{j-1}}\log^2 n}.
\end{equation}
Combining (\ref{ubsidon}) and (\ref{lbsidon}), we obtain
$$ \left|T^{k_j}-T^{k_{j-1}}\right|\leq\frac{216\sqrt{n}}{\log^2 n}\quad \text{for }2\leq j\leq p.$$
Let $j=p+1$, since $\frac{n}{2^K}\leq |A^{k_{p+1}}|\leq|A^{k_p}|\leq\frac{n}{2^{K-1}}$, by a similar argument, we obtain that
$$\left|T^{k_{p+1}}-T^{k_{p}}\right|\frac{|A^{k_{p+1}}||U_0|^2}{12n}\leq \sum_{u_i\in T^{k_{p+1}}-T^{k_{p}}}d_{H_0[A_i]}(u_i)\leq \sum_{v\in A^{k_{p}}}t^{k_{p+1}}(v)\leq 9\frac{\sqrt{n}}{\log^4 n}|A^{k_{p}}|,$$
which gives 
$$\left|T^{k_{p+1}}-T^{k_{p}}\right|\leq\frac{216n^{3/2}}{\log^4 n|U_0|^2}=\frac{216\sqrt{n}}{\log^2 n}.$$
We eventually have $$|T_{\ell_1}|=\bigcup_{j=2}^{p+1}|T^{k_j}-T^{k_{j-1}}|\leq \frac12\log n\cdot\frac{216\sqrt{n}}{\log^2 n}=\frac{108\sqrt{n}}{\log n}.$$
\newline
For Phase 1, we define:
$$R_1=T_{\ell_1},\quad \quad C_1=A_{\ell_1}.$$
\newline
\noindent\textbf{Phase 2.} If $|C_{1}\cap I|< 12\frac{n}{|C_1|}$ or $|C_1|\leq12\sqrt{n}$, we stop the algorithm with $L=1$. Otherwise, take an arbitrary set $U_1\subseteq C_{1}\cap I$ of size $12\frac{n}{|C_1|}$ and denote $H_1=H^{U_1}(C_1)$. We will use $H_1$ as an auxiliary graph and run the core algorithm with 
$t_{\textrm{threshold}}=\frac{\sqrt{n}}{\log^3 n}$
 and initial state 
 $$A_0=C_1,\quad T_0=\emptyset\quad \text{and}\quad t_0(v)=0,\;\text{for every}\;v\in A_0,$$
  until we obtain the set $A_{\ell_2}$, the first set of size smaller than $|C_1|/2$.
\newline

We use a similar argument as in Phase 1. For every $v\in A_{\ell_2}\cup T_{\ell_2}$, the algorithm ensures that $t_{\ell_2}(v)\leq \frac{\sqrt{n}}{\log^3 n}$. For every $v\in A_0-(A_{\ell_2}\cup T_{\ell_2})$, suppose $v$ was removed from $A_0$ in the $i$-th round and let $u_i$ denote the selected vertex in the round. Then using Lemma~\ref{sidonmulti}, we obtain that $$t_{\ell_2}(v)\leq t_{i-1}(v)+d_{H_1}(v, u_i)\leq 2\frac{\sqrt{n}}{\log^3 n}.$$ Therefore, we have
\begin{equation}\label{ubsidon2}
\sum_{v\in A_{0}}t_{\ell_2}(v)\leq2\frac{\sqrt{n}}{\log^3 n}|A_{0}|=2\frac{\sqrt{n}}{\log^3 n}|C_1|. 
\end{equation}

On the other hand, take a vertex $u_i\in T_{\ell_2}$ and suppose that $u_i$ is selected in the $i$-th round, i.e., from $A_i$. Since $A_{\ell_2}$ is the first set of size smaller than $|C_1|/2$, we have $|A_i| \geq |C_1|/2$ and then $|A_i||U_1|\geq \frac{|C_1|}{2}\cdot12\frac{n}{|C_1|}=6n$. From Lemma $\ref{sidonedge}$, we obtain that
$$d_{H_1[A_i]}(u_i)\geq \frac{|A_{i}||U_1|^2}{12n}\geq\frac{|C_1||U_1|^2}{24n}.$$
Consequently, we have
 \begin{equation}\label{lbsidon2}
 \sum_{v\in A_0}t_{\ell_2}(v)= \sum_{u_i\in T_{\ell_2}}d_{H_1[A_i]}(u_i)\geq \left|T_{\ell_2}\right|\frac{|C_1||U_1|^2}{24n}.
 \end{equation} 
 Combining (\ref{ubsidon2}) and (\ref{lbsidon2}), we obtain
  $$ \left|T_{\ell_2}\right|\leq \frac{48n^{3/2}}{\log^3 n|U_1|^2}=\frac{48n^{3/2}|C_1|^2}{\log^3 n\cdot12^2n^2}\leq \frac{48n^{3/2}(6\sqrt{n}\log n)^2}{\log^3 n\cdot12^2n^2}=\frac{12\sqrt{n}\log^2 n}{\log^3 n}\leq \frac{12\sqrt{n}}{\log n}.$$
For Phase 2, we define:
$$R_2=T_{\ell_2},\quad \quad C_2=A_{\ell_2}.$$
\newline
\noindent\textbf{Phase $\mathbf{j}$ for $\mathbf{j\geq3}$.} In general, when the algorithm goes to Phase $j$, we first check if $|C_{j-1}\cap I|< 12\frac{n}{|C_{j-1}|}$ or $|C_{j-1}|\leq12\sqrt{n}$. If one of these conditions holds, we stop the algorithm with $L=j-1$. Otherwise, take an arbitrary set $U_{j-1}\subseteq C_{j-1}\cap I$ of size $12\frac{n}{|C_{j-1}|}$ and denote $H_{j-1}=H^{U_{j-1}}(C_{j-1})$. We will use $H_{j-1}$ as an auxiliary graph and run the core algorithm with 
$t_{\textrm{threshold}}=\frac{\sqrt{n}}{\log^3 n}$
 and initial state 
$$A_0=C_{j-1},\quad T_0=\emptyset\quad \text{and}\quad t_0(v)=0,\;\text{for every}\; v\in C_{j-1},$$ until we obtain the set $A_{\ell_j}$, the first set of size smaller than $|C_{j-1}|/2$. Using the exactly same argument as in Phase 2, in the end, we obtain
$$ \left|T_{\ell_{j}}\right|\leq \frac{48n^{3/2}}{\log^3n|U_{j-1}|^2}\leq\frac{48n^{3/2}|C_{j-1}|^2}{\log^3n\cdot12^2n^2}\leq \frac1{2^{2j-4}}\cdot\frac{48n^{3/2}|C_1|^2}{\log^3n\cdot12^2n^2}\leq\frac1{2^{2j-4}}\cdot \frac{12\sqrt{n}}{\log n}.$$
For Phase $j$, we define:
$$R_j=T_{\ell_j}, \quad \quad C_j=A_{\ell_j}.$$
\newline
\indent The algorithm terminates if any of the stopping rules is satisfied. In the process, we obtain set sequences $\{R_0, R_1, R_2, \cdots, R_L\}$, $\{U_0, U_1, U_2, \ldots, U_{L-1}\}$ and $\{C_0, C_1, C_2, \ldots, $ $C_L\}$, which satisfy Conditions (ii)--(vii). From the stopping rules, we know that $12\sqrt{n}< |C_{L-1}|\leq\frac{6\sqrt{n}\log n}{2^{L-2}}$, which implies $L< \log \log n+1.$ 

It remains to check Condition (i). For every $j\geq 0$, if a vertex $v$ was removed in Phase $j$, then there exists $i$ such that $t_i(v) > t_{\textrm{threshold}}$. This implies that there are more than $t_{\textrm{threshold}}$ Sidon 4-tuples containing $v$ in $I$. By the choices of $t_{\textrm{threshold}}$ and $R_0$, we know that $v$ does not belong to $I$, and Condition (i) follows from it.
\qed
\newline

\noindent\textbf{Remark.}  In Case 2, we aim to find a set satisfying inequalities (\ref{Wset}) and (\ref{Wmulti}). For this reason, when we apply the probabilistic method, we need consider the random subset $W\subseteq I$ with the probability $2/\sqrt{\log n}$. On the other hand, the proof requires the size of $W$ to be large enough, i.e., $\sqrt{n}/\log n$. Therefore, it is necessary to assume that $|I|\geq \sqrt{n}/\sqrt{\log n}$.
\newline

Now, let us assume that the set $I$ satisfies $|\{v\in I: s_I(v)\geq \sqrt{n}/\log^4 n\}|\leq \sqrt{n}/\log n$. In regard to this assumption, Case 1 always works for the initial certificate $\{R_0, C_0\}$. This means that when we go through the previous proof under the new assumption, we can actually skip Case 2, where the assumption `$|I|\geq \sqrt{n}/\sqrt{\log n}$' is needed, and let everything else follow in the same way. As a result, we obtain a lemma similar to Lemma~\ref{mainlemmasidon}. (We could get better constants than before, but we do not aim to optimize the constants in this paper.)
\begin{lemma}\label{mainlemmasidon2}
For a sufficiently large integer $n$, let $\alpha=n/\log^4 n$ and $I$ be an $\alpha$-generalized Sidon subset of $[n]$ such that for every $v\in I$, $s_I(v)< \sqrt{n}/\log^3 n$. Further assume that $|\{v\in I: s_I(v)\geq \sqrt{n}/\log^4 n\}|\leq \sqrt{n}/\log n$. Then there exist set sequences $R_0, R_1, \ldots, R_{L}$ and $U_0, U_1, \ldots, U_{L-1}$, where $0\leq L< \log \log n+1$, which determine a unique set sequence $C_0\supset C_1\supset C_2\supset \ldots\supset C_L$. Furthermore, Conditions \emph{(\textit{i})--(\textit{vii})} from Lemma~\ref{mainlemmasidon} are all satisfied.
\end{lemma}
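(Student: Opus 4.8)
The plan is to re-run the proof of Lemma~\ref{mainlemmasidon} essentially verbatim, changing only the construction of the initial certificate $\{R_0,C_0\}$: under the hypothesis of Lemma~\ref{mainlemmasidon2} we are always in the situation of \textbf{Case 1} from that proof. Indeed, the assumption $|\{v\in I: s_I(v)\geq \sqrt{n}/\log^4 n\}|\leq \sqrt{n}/\log n$ is precisely the statement $|I_h|\leq\sqrt{n}/\log n$, which is exactly the case distinction under which Case~1 is invoked. So we simply set $R_0=I_h$, $C_0=[n]-R_0$ and never enter Case~2. Since Case~2 is the only place where the probabilistic Lemmas~\ref{Wsize}, \ref{Wsmulti} and \ref{Wdegree} are used, and (by the Remark after the proof of Lemma~\ref{mainlemmasidon}) the assumption $|I|\geq\sqrt{n}/\sqrt{\log n}$ was needed only to guarantee that the random set $W$ of Case~2 is large enough, dropping Case~2 is exactly what allows us to discard that size assumption while leaving the rest of the argument intact.

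The only spot where Case~2 leaves a trace outside the initial set-up is \textbf{Phase~1}: there the multiplicity bound $d_{H_0}(v,u_i)\leq 8\sqrt{n}/\log^4 n$ for $H_0=H^{U_0}(A)$ was obtained from Lemma~\ref{sidonmulti} together with inequality (\ref{Wmulti}). With the Case~1 set-up this input is no longer needed: now $C_0\cap I=I-I_h=\{v\in I: s_I(v)<\sqrt{n}/\log^4 n\}$, and $U_0$ is chosen as a subset of $C_0\cap I$, so $U_0\subseteq\{v\in I: s_I(v)<\sqrt{n}/\log^4 n\}$ and Lemma~\ref{sidonmulti} alone (with $g=\sqrt{n}/\log^4 n$) bounds every edge multiplicity of $H_0$ by $\sqrt{n}/\log^4 n$. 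Hence the inequality $t^{k_j}(v)\leq t_{\textrm{threshold}}+d_{H_0}(v,u_i)$ underlying (\ref{ubsidon}) still holds (even with $2\sqrt{n}/\log^4 n$ in place of $9\sqrt{n}/\log^4 n$), so the bound $|T_{\ell_1}|\leq 108\sqrt{n}/\log n$ of Condition~(iv) follows as before, with room to spare. It is also immediate that $R_0=I_h$ satisfies $|R_0|\leq\sqrt{n}/\log n\leq 16\sqrt{n}/\log n$, giving Condition~(iii), and that the required subset $U_0\subseteq C_0\cap I$ of size $\sqrt{n}/\log n$ exists precisely when $|C_0\cap I|\geq\sqrt{n}/\log n$, which is exactly the branch of Phase~1 in which the algorithm does not stop with $L=0$.

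Phases~2 and $j$ (for $j\geq3$) make no reference to the Case~1/Case~2 distinction: in them $U_{j-1}$ is an arbitrary subset of $C_{j-1}\cap I$ of the prescribed size, all multiplicity bounds come from Lemma~\ref{sidonmulti} with $g=\sqrt{n}/\log^3 n$ (using the blanket hypothesis $s_I(v)<\sqrt{n}/\log^3 n$ for all $v\in I$), and the estimates (\ref{ubsidon2})--(\ref{lbsidon2}) together with their analogues in later phases, the stopping rules, the bound $L<\log\log n+1$, and the verification of Conditions~(i), (ii), (v), (vi) and (vii) all go through word for word. This yields all of (i)--(vii) and proves Lemma~\ref{mainlemmasidon2}. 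The only thing requiring care is the bookkeeping indicated in the previous two paragraphs, namely confirming that no condition or inequality appearing in Phases~1 through $L$ secretly depended on the probabilistic construction of Case~2; I expect this routine check, rather than any genuinely new difficulty, to be the main (and fairly modest) obstacle.
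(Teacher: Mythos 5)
Your proposal is correct and is essentially the paper's own proof: the paper also observes that the hypothesis $|I_h|\leq\sqrt{n}/\log n$ forces Case~1 for the initial certificate, so Case~2 (the only place the probabilistic lemmas and the assumption $|I|\geq\sqrt{n}/\sqrt{\log n}$ are used) can be skipped and the rest of the argument for Lemma~\ref{mainlemmasidon} runs unchanged. Your extra check that in Phase~1 the multiplicity bound now comes from Lemma~\ref{sidonmulti} alone (with $g=\sqrt{n}/\log^4 n$, since $U_0\subseteq C_0\cap I=I_l$) rather than from (\ref{Wmulti}) is exactly the bookkeeping implicit in the paper's remark.
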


\section{Counting generalized Sidon sets}\label{thmproof}
\noindent\textit{Proof of Theorem~\ref{sidonthm}.} 
Since the number of sets in $[n]$ of size at most $\frac{\sqrt{n}}{\log n}$ is bounded by $2^{\sqrt{n}}$, it is sufficient to count the sets of size at least $\frac{\sqrt{n}}{\sqrt{\log n}}.$
For every $I\in \mathcal{I}_n(\alpha)$, we iteratively remove a number $v$ from $I$, which has $s_I(v)\geq \frac{\sqrt{n}}{\log^3 n}$. Denote by $I'$ the set of remaining numbers. Since $I$ contains at most $\frac{n}{\log^4 n}$ Sidon 4-tuples, the process stops after at most $\frac{\sqrt{n}}{\log n}$ steps, i.e.,
\begin{equation}\label{hset}
|I-I'|\leq \frac{\sqrt{n}}{\log n}.
\end{equation}
This cleaning process ensures that $s_{I'}(v)< \frac{\sqrt{n}}{\log^3 n}$, for every $v\in I'$. By Lemma~\ref{mainlemmasidon}, $I'$ can be associated to a certificate $\{\mathcal{R}, \mathcal{U}\}$, where $\mathcal{R}=\{R_0, R_1, \ldots, R_L\}$ and $\mathcal{U}=\{U_0, U_1, \ldots, U_{L-1}\}$ are two set sequences satisfying Conditions (i)--(vii) in Lemma~\ref{mainlemmasidon}. Thus, each $I\in \mathcal{I}_n(\alpha)$ can be assigned to a certificate $$\mathcal{C}_I=[I-I', L, \mathcal{R}, \mathcal{U}].$$
Note that different sets could have the same certificate. Therefore, to estimate $|\mathcal{I}_n(\alpha)|$, we need to give upper bounds on the number of certificates and on the number of subsets assigned to one certificate.

Let $\mathcal{C}=\{\mathcal{C}_I=[I-I', L, \mathcal{R}, \mathcal{U}] \mid I\in \mathcal{I}_n(\alpha)\}$. For every integer $\ell\geq 0$, denote by $\mathcal{C}_{\ell}$ the set of certificates in $\mathcal{C}$ with $L=\ell$. 
By Lemma~\ref{mainlemmasidon}, we have 
\begin{equation}\label{totalC}
\mathcal{C}=\bigcup_{\ell=0}^{\log \log n+1}\mathcal{C}_{\ell}.
\end{equation}
For $\ell=0$ and a certificate $[I-I',0, \mathcal{R}, \mathcal{U}]\in \mathcal{C}_0$, $\mathcal{U}$ is empty sequence and $\mathcal{R}$ only contains one set, i.e. $\mathcal{R}=\{R_0\}$.  By Lemma~\ref{mainlemmasidon} and (\ref{hset}), $R_0$ and $I-I'$ are subsets of $[n]$ satisfying $|R_0|\leq \frac{16\sqrt{n}}{\log n}$ and $|I-I'|\leq \frac{\sqrt{n}}{\log n}$ respectively. Therefore, the number of certificates in $\mathcal{C}_0$ is 
\begin{equation}\label{c0}
|\mathcal{C}_0|\leq \sum_{i=0}^{\frac{\sqrt{n}}{\log n}}\binom{n}{i}+\sum_{i=0}^{\frac{16\sqrt{n}}{\log n}}\binom{n}{i}\leq2^{16\sqrt{n}+1}.
\end{equation}

For $1\leq\ell\leq\log \log n+1$ and a certificate $[I-I', \ell, \mathcal{R}, \mathcal{U}]\in\mathcal{C}_{\ell},$
$\mathcal{R}, \mathcal{U}$ can be written as $\mathcal{R}=\{R_0, R_1, \ldots, R_{\ell}\}$ and $\mathcal{U}=\{U_0, U_1, \ldots, U_{\ell-1}\}$.
Similarly, since $I-I'\subseteq[n]$ and $|I-I'|\leq \frac{\sqrt{n}}{\log n}$, the number of ways to choose $I-I'$
is at most $$\sum_{i=0}^{\frac{\sqrt{n}}{\log n}} \binom{n}{i}\leq 2\binom{n}{\frac{\sqrt{n}}{\log n}}\leq n^{\frac{\sqrt{n}}{\log n}}= 2^{\sqrt{n}}.$$
%
Now, we discuss the number of choices for sequences $\mathcal{U}=\{U_0, U_1, \ldots, U_{\ell-1}\}$ and $\mathcal{R}=\{R_0, R_1,$ $\ldots, R_{\ell}\}$ iteratively. 
First, by Condition (iii) in Lemma~\ref{mainlemmasidon}, we have $R_0\subseteq [n]$ and $|R_0|\leq \frac{16\sqrt{n}}{\log n}$. Thus, the number of ways to choose $R_0$ is at most 
$$\sum_{i=0}^{\frac{16\sqrt{n}}{\log n}} \binom{n}{i}\leq 2\binom{n}{\frac{16\sqrt{n}}{\log n}}\leq 2^{16\sqrt{n}}.$$
From the proof of Lemma~\ref{mainlemmasidon}, $I-I'$ and $R_0$ determines a unique set $C_0$ of size at most $n$. By Conditions (iv) and (v) in Lemma~\ref{mainlemmasidon}, we obtain that $U_0\subseteq [n]$, $R_1\subseteq C_0$, $|U_0|=\frac{\sqrt{n}}{\log n}$ and $|R_1|\leq \frac{108\sqrt{n}}{\log n}$. Thus, the number of ways to choose $U_0$ and $R_1$ are at most
$$\binom{n}{\frac{\sqrt{n}}{\log n}}\leq 2^{\sqrt{n}}$$ 
and  
$$\sum_{i=0}^{\frac{108\sqrt{n}}{\log n}}\binom{n}{i}\leq 2\binom{n}{\frac{108\sqrt{n}}{\log n}}\leq 2^{108\sqrt{n}}$$ 
respectively.
%
For every $1\leq i\leq \ell-1$, suppose that sets $I-I'$, $R_0, \ldots, R_i$, and $U_0, \ldots, U_{i-1}$ are already fixed.
The proof of Lemma~\ref{mainlemmasidon} shows that there is a unique set $C_i$ such that $U_i, R_{i+1}\subseteq C_i\subseteq[n]$. Moreover, there exists an integer $z_i$ such that 
$$\frac{6\sqrt{n}\log n}{2^{z_i}}<|C_i|\leq \frac{6\sqrt{n}\log n}{2^{z_i-1}},$$
where $1\leq z_1<\ldots<z_{i-1}<z_i<\log \log n$. 
By Conditions (iv) and (vi) in Lemma~\ref{mainlemmasidon}, we obtain that $|U_i|=12\frac{n}{|C_i|}$ and $|R_{i+1}|\leq\frac1{2^{2i-2}}\frac{12\sqrt{n}}{\log n}$. Thus, the number of ways to choose $U_i$ and $R_{i+1}$ are at most 
$$\binom{|C_i|}{12n/|C_i|}\leq \binom{ \frac{6\sqrt{n}\log n}{2^{z_i-1}}}{\frac{12n\cdot2^{z_i}}{6\sqrt{n}\log n}}$$ 
and 
$$\sum_{i=0}^{\frac1{2^{2i-2}}\frac{12\sqrt{n}}{\log n}} \binom{n}{i}\leq 2\binom{n}{\frac1{2^{2i-2}}\frac{12\sqrt{n}}{\log n}}\leq 2^{\frac{12}{2^{2i-2}}\sqrt{n}}$$
respectively.
We summarize the above discussion and obtain that
\begin{equation}\label{cell}
\begin{split}
|\mathcal{C}_l|&
\leq 2^{\sqrt{n}+16\sqrt{n}+\sqrt{n}+108\sqrt{n}}
\cdot\prod_{i=1}^{\infty}2^{\frac{12}{2^{2i-2}}\sqrt{n}}
\sum_{z_1, \ldots, z_{\ell-1}}\binom{ \frac{6\sqrt{n}\log n}{2^{z_1-1}}}{\frac{12n\cdot2^{z_1}}{6\sqrt{n}\log n}}\binom{ \frac{6\sqrt{n}\log n}{2^{z_2-1}}}{\frac{12n\cdot2^{z_2}}{6\sqrt{n}\log n}}\ldots\binom{ \frac{6\sqrt{n}\log n}{2^{z_{\ell-1}-1}}}{\frac{12n\cdot2^{z_{\ell-1}}}{6\sqrt{n}\log n}}\\
&\leq2^{142\sqrt{n}}\cdot\sum_{z_1, \ldots, z_{\ell-1}}\binom{ \frac{6\sqrt{n}\log n}{2^{z_1-1}}}{\frac{12n\cdot2^{z_1}}{6\sqrt{n}\log n}}\binom{ \frac{6\sqrt{n}\log n}{2^{z_2-1}}}{\frac{12n\cdot2^{z_2}}{6\sqrt{n}\log n}}\ldots\binom{ \frac{6\sqrt{n}\log n}{2^{z_{\ell-1}-1}}}{\frac{12n\cdot2^{z_{\ell-1}}}{6\sqrt{n}\log n}},
\end{split}
\end{equation}
where $z_1<z_2<\ldots<z_{\ell-1}$ take over integers in $[1,\ \log\log n).$
To estimate the summation term in inequality (\ref{cell}), we provide the following claim.
\begin{claim}\label{sidonclaim}
For sufficiently large $n$, we have 
$$\binom{ 6\sqrt{n}\log n}{\frac{24n}{6\sqrt{n}\log n}}
\binom{ \frac{6\sqrt{n}\log n}{2}}{\frac{24n\cdot2}{6\sqrt{n}\log n}}
\ldots\binom{ \frac{6\sqrt{n}\log n}{2^{\log\log n-1}}}{\frac{24n\cdot2^{\log\log n-1}}{6\sqrt{n}\log n}}
\leq 2^{25\sqrt{n}}.$$
\end{claim}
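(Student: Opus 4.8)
The plan is the standard one: bound each binomial coefficient in the product by $\binom{a}{b}\le(ea/b)^b$ and then sum the resulting exponents, which form an arithmetico-geometric series. Write $M=6\sqrt n\log n$, so that (using $24n/M=4\sqrt n/\log n$) the product in the claim is $\prod_{j=0}^{\log\log n-1}\binom{a_j}{b_j}$ with $a_j=M2^{-j}$ and $b_j=\frac{4\sqrt n}{\log n}2^j$. The key observation is that the ratio simplifies and stays bounded below: $a_j/b_j=\tfrac32\log^2 n\cdot 4^{-j}$, and over the range $0\le j\le\log\log n-1$ this is always at least $6$ (the minimum, attained at $j=\log\log n-1$, is exactly $6$, since $4^{\log\log n}=\log^2 n$). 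In particular $b_j\le a_j$, so each binomial is well defined and the estimate $\binom{a_j}{b_j}\le(ea_j/b_j)^{b_j}$ applies.

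Taking base-$2$ logarithms and writing $k=\log\log n$ (so $2^k=\log n$ and $\log(\log^2 n)=2k$), one has
\[
\log\binom{a_j}{b_j}\le b_j\log\!\left(\frac{ea_j}{b_j}\right)=\frac{4\sqrt n}{\log n}\,2^j\bigl(c_0+2k-2j\bigr),\qquad c_0:=\log(3e/2)<2.1.
\]
Summing over $j$, using the identities $\sum_{j=0}^{k-1}2^j=2^k-1$ and $\sum_{j=0}^{k-1}j2^j=(k-2)2^k+2$, together with the fact that each factor $c_0+2k-2j$ is positive on this range, gives
\[
\sum_{j=0}^{k-1}2^j(c_0+2k-2j)=(c_0+2k)(2^k-1)-2\bigl((k-2)2^k+2\bigr)<2^k(c_0+4).
\]
Multiplying by $\frac{4\sqrt n}{\log n}$ and using $2^k=\log n$ then yields $\sum_{j=0}^{k-1}\log\binom{a_j}{b_j}<4(c_0+4)\sqrt n<25\sqrt n$, since $c_0+4<6.1$ and $4\cdot 6.1<25$. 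Exponentiating gives the claimed bound $2^{25\sqrt n}$.

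The computation is routine, but two points deserve care. First, the cancellation in the arithmetico-geometric sum is essential rather than cosmetic: because $b_j$ grows like $2^j$, the naive bound $a_j/b_j\le\tfrac32\log^2 n$ would only give $\sum_j\log\binom{a_j}{b_j}=O(\sqrt n\log\log n)$, which blows up; it is the $-2j$ term, coming from the $4^{-j}$ factor in $a_j/b_j$, that exactly tames the $2\log\log n$ growth and leaves a clean $\Theta(\sqrt n)$ bound. Second, one must keep the constants honest so that the final coefficient falls strictly below $25$: the two negative corrections — from $\sum_{j<k}j2^j$ and from $2^k-1$ versus $2^k$ — supply just enough slack, so the bound $\approx 24.1\sqrt n$ sits comfortably under $25\sqrt n$ for all large $n$. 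I expect this bookkeeping of constants, rather than any conceptual difficulty, to be the only real obstacle; this also explains why the statement is only asserted for sufficiently large $n$ (one also needs $n$ large enough that $\log\log n$ is positive and the range of $j$ is nonempty).
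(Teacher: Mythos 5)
Your proof is correct and follows essentially the same route as the paper: bound each binomial by $(ea/b)^b$, take base-2 logarithms, and sum the resulting arithmetico-geometric series in the exponent, arriving at the same constant $\approx 24.1 < 25$. The only cosmetic differences are the direction of indexing (the paper sets $x=12\sqrt n$ and indexes upward) and that the paper extends the exponent sum to infinity rather than evaluating the finite sum exactly.
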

\begin{proof}
Let $x=\frac{6\sqrt{n}\log n}{2^{\log\log n-1}}=12\sqrt{n}$. Then the left side is equal to
\begin{equation*}
\begin{split}
\prod_{i=0}^{\log\log n-1} \binom{ 2^ix}{\frac{24n}{2^ix}}&
\leq  \prod_{i=0}^{\log\log n-1}\left(\frac{e\cdot x^22^{2i}}{24n}\right)^{\frac{24n}{2^ix}}
\leq \prod_{i=0}^{\log\log n-1} (6e2^{2i})^{\frac{2}{2^i}\sqrt{n}}\\
 &\leq\prod_{i=0}^{\log\log n-1} 2^{\left[(\log6e+2i)\frac{2}{2^i}\right]\sqrt{n}}
\leq 2^{\left[\sum_{i=0}^{\infty}(\log6e+2i)\frac{2}{2^i}\right]\sqrt{n}}\\
&= 2^{(4\log 6e+8)\sqrt{n}}\leq 2^{25\sqrt{n}},
\end{split}
\end{equation*}
where the first inequality follows from the Stirling's formula.
\end{proof}

Using Claim~\ref{sidonclaim}, we can show that for every $1\leq\ell\leq\log \log n+1$,
\begin{equation}\label{Csub}
\begin{split}
|\mathcal{C}_{\ell}| 
&\leq 2^{142\sqrt{n}}
\cdot\binom{\log\log n}{\ell-1}2^{25\sqrt{n}}
\leq 2^{167\sqrt{n}}\log n.
\end{split}
\end{equation}
Combining (\ref{c0}) and (\ref{Csub}), we obtain
\begin{equation}\label{Cmount}
|\mathcal{C}|=\sum_{\ell=0}^{\log \log n+1}|\mathcal{C}_{\ell}|\leq 2^{16\sqrt{n}+1}+2^{167\sqrt{n}}(\log \log n+1)\log n\leq 2^{168\sqrt{n}}.
\end{equation}

It remains to give an upper bound on the number of subsets assigned to one certificate. For a certificate $C=[I-I', L,  \mathcal{R}, \mathcal{U}]\in\mathcal{C}$, let $\mathcal{I}_{C}=\{I\in \mathcal{I}_n(\alpha) \mid C_I=C\}.$
For every $I\in \mathcal{I}_{C}$, by Lemma~\ref{mainlemmasidon}, we have 
$$I\subseteq (I-I')\cup \bigcup_{i=0}^{L} R_i\cup C_L,$$ 
where $C_L$ is uniquely determined. Note that the set $(I-I')\cup \bigcup_{i=0}^{L} R_i$ is given by the certificate $C$.  Therefore, $\mathcal{I}_{C}$ is decided by the ways to choose $C_{L}\cap I=C_{L}\cap I'$. There are three cases:
\begin{caseof}
  \case{$|C_L|\leq 12\sqrt{n}$.}{In the case, we have $|\mathcal{I}_{C}|\leq 2^{|C_L|}\leq 2^{12\sqrt{n}}.$}
  \case{$L=0$ and $|C_L|> 12\sqrt{n}$.}{By Condition (vii) in Lemma~\ref{mainlemmasidon}, for every $I\in \mathcal{I}_{C}$, $I$ satisfies $|C_0\cap I'|< \frac{\sqrt{n}}{\log n}$. In this case, we have
$$|\mathcal{I}_C|\leq\sum_{i=0}^{\frac{\sqrt{n}}{\log n}}\binom{|C_0|}{i}\leq \sum_{i=0}^{\frac{\sqrt{n}}{\log n}}\binom{n}{i}\leq 2\binom{n}{\frac{\sqrt{n}}{\log n}}\leq 2^{\sqrt{n}}.$$}
  \case{$L\geq 1$ and $|C_L|> 12\sqrt{n}$.}{By Condition (vii) in Lemma~\ref{mainlemmasidon}, for every $I\in \mathcal{I}_{C}$, $I$ satisfies $|C_L\cap I'|<12\frac{n}{|C_L|}$. In this case, we have 
\begin{equation*}
\begin{aligned}
  |\mathcal{I}_C|&\leq\sum_{i=0}^{12\frac{n}{|C_L|}}\binom{|C_L|}{i}\leq 2\binom{|C_L|}{12\frac{n}{|C_L|}}.
\end{aligned}
\end{equation*}
Let $x=12\frac{n}{|C_L|}$. By convexity, we obtain that
$$|\mathcal{I}_C|\leq 2\binom{12n/x}{x}
\leq 2\left(\frac{12en}{x^2}\right)^{x}
\leq 2^{[\log(12en)-2\log x]x+1}
\leq 2^{\sqrt{12en}+1}
\leq 2^{6\sqrt{n}}.$$}
\end{caseof}

From the above discussion, for every $C\in \mathcal{C}$, we have
\begin{equation}\label{Cone}
|\mathcal{I}_C|\leq 2^{12\sqrt{n}}.
\end{equation}
Eventually, combining (\ref{Cmount}) and (\ref{Cone}), we obtain that
\[
\pushQED{\qed}
|\mathcal{I}_n(\alpha)|\leq |\mathcal{C}|\cdot 2^{12\sqrt{n}}\leq 2^{180\sqrt{n}}.
\qedhere
\popQED
\]
\newline

\noindent\textit{Proof of Theorem~\ref{sidonthm2}: }  
For every set $J\in \mathcal{J}_n(\alpha)$, we apply the same cleaning process as in the proof of Theorem~\ref{sidonthm} and obtain a set $J'$ satisfying $|J-J'|\leq \frac{\sqrt{n}}{\log n}$ and $s_{J'}(v)<\frac{\sqrt{n}}{\log^3 n}$ for every $v\in J'$. Due to the definition of $\mathcal{J}_n(\alpha)$ and $J'\subseteq J$, we also have $|\{v\in J': s_{J'}(v) \geq \frac{\sqrt{n}}{\log^4 n}\}|\leq \frac{\sqrt{n}}{\log n}$.
By Lemma~\ref{mainlemmasidon2}, $J'$ can be associated to a certificate $\{\mathcal{R}, \mathcal{U}\}$, where $\mathcal{R}=\{R_0, R_1, \ldots, R_L\}$ and $\mathcal{U}=\{U_0, U_1, \ldots, U_{L-1}\}$ are two set sequences satisfying Conditions (i)--(vii) in Lemma~\ref{mainlemmasidon}. The rest of the proof is the same as that of Theorem~\ref{sidonthm}.
\qed

\section{Concluding remarks}\label{remark}
\noindent\textbf{Remark 1.}
In \cite{ST}, Saxton and Thomason established the hypergraph container theorem not only covering independent sets but also for sufficiently sparse structures. One can use their result to estimate the number of $\alpha$-generalized sets for some functions $\alpha$; however, the estimates obtained from it are weaker than the ones from the graph container method. 
To be more specific, using the hypergraph container method, we would consider the 4-uniform hypergraph whose vertex set is $[n]$ and whose edges are all the Sidon 4-tuples; to generate small containers, we need to iterate Theorem 6.2 (\cite{ST}) repeatedly  $\Theta(\log n)$ times. This produces $2^{O(n\tau\log(1/\tau)\log n)}$ containers of size at most $O(n\tau)$, for the sets with at most $O(\tau^4n^3)$ Sidon 4-tuples.
Since we are interested in obtaining a family of containers with $2^{\Theta(\sqrt{n})}$ elements, the order of $\tau$ should not be higher than  $1/(\sqrt{n}\log^2 n)$. (One can easily check that $\tau=\Theta(1/\sqrt{n}\log^2 n)$ satisfies the conditions of Theorem 6.2.) Therefore, the hypergraph container theorem in \cite{ST} provides that the number of $\alpha$-generalized Sidon is $2^{O(\sqrt{n})}$ for $\alpha=O(\tau^4n^3)=O(n/\log^8 n)$, while the best result we have is for $\alpha=O(n/\log^5 n)$.
\newline

\noindent\textbf{Remark 2.}
We also studied the family of $\alpha$-generalized Sidon sets for some other functions $\alpha$. Denote by $\mathcal{G}_n(\alpha)$ the family of $\alpha$-generalized Sidon sets in $[n]$. The results we have is summarized in the following table.

\begin{table}[h!]
\centering
\begin{tabular}{ p{2cm}||p{5cm}|p{5cm}}
 \hline
 $\alpha$   & Upper bound for $|\mathcal{G}_n(\alpha)|$ & Lower bound for $|\mathcal{G}_n(\alpha)|$\\
 \hline
$n/\log^5 n$   & $2^{O(\sqrt{n})}$    & $ 2^{\Omega(\sqrt{n})}$\\
$n/\log^4 n$   & $2^{O(\sqrt{n}\log^{1/4} n))}$  & $2^{\Omega(\sqrt{n})}$\\
$n/\log^3 n$   & $2^{O(\sqrt{n}\sqrt{\log n})}$ & $2^{\Omega(\sqrt{n}\log^{1/4} n)}$\\
$n/\log^2 n$   & $2^{O(\sqrt{n}\log^{3/4} n)}$ & $2^{\Omega(\sqrt{n}\sqrt{\log n})}$\\
$n/\log n$   &  $2^{O(\sqrt{n}\log n)}$ & $2^{\Omega(\sqrt{n}\log^{3/4} n)}$\\
 $n$ & $ 2^{O(\sqrt{n}\log n)}$  & $2^{\Omega(\sqrt{n}\log n)}$\\
 \hline
\end{tabular}
\caption{The number of $\alpha$-generalized Sidon sets.}
\label{table}
\end{table}

In Table~\ref{table}, all the lower bounds come from the probabilistic argument discussed in Section~\ref{intro}, except for the case $\alpha\leq n/\log^4 n$, where we use the number of Sidon sets as the lower bound; all the upper bounds follow from our graph container method, except for the case $\alpha=n$, where we use Corollary~\ref{supersatu}. For $\alpha\in \{ n/\log^5 n, n\}$, the current bounds are tight. For other $\alpha$, the distance between the lower bound and the upper bound is a $\log^{1/4} n$ factor on the exponent. We believe that the lower bounds are the truth.
\newline

\noindent\textbf{Acknowledgment.} We thank for Wojciech Samotij for some useful comments.

\end{document}